 \def\ps@pprintTitle{%
 	\let\@oddhead\@empty
 	\let\@evenhead\@empty
 	\def\@oddfoot{\footnotesize\itshape
 		{} \hfill\today}%
 	\let\@evenfoot\@oddfoot
 }
\newtheorem{theor}{Theorem}[section]
\newtheorem{prop}[theor]{Proposition}
\newtheorem{cor}[theor]{Corollary}
\newtheorem{lemma}[theor]{Lemma}
\theoremstyle{definition} 
\newtheorem{defin}[theor]{Definition}
\newtheorem{ex}{Example}
\newtheorem{exs}{Examples}
\newtheorem{conv}{Convention}
\DeclareMathOperator{\Sym}{Sym}
\DeclareMathOperator{\id}{id}
\begin{document}

\begin{frontmatter}
	\title{Left seminear-rings, groups semidirect products and left cancellative left semi-braces 
}
	\author[unile]{M.~CASTELLI	\tnoteref{mytitlenote}}
	\tnotetext[mytitlenote]{The author is member of GNSAGA (INdAM). ORCID: 0000-0002-6055-1328}
	\ead{marco.castelli@unisalento.it - marcolmc88@gmail.com}

\begin{abstract}
We study some relations between left cancellative left semi-braces and other existing algebraic structures. In particular, we show that every left semi-brace arises from a left seminear-ring, extending the correspondence given by Rump between skew left braces and left near-rings in \cite{rump2019set}. Moreover, we show a correspondence between certain groups semidirect products and left cancellative left semi-braces satisfying an additional hypothesis on the set of idempotents. As an application, we classify left cancellative left semi-braces of size $pq$ and $2p^2$ such that the set of idempotents $E$ is a Sylow subgroup of the multiplicative group. Finally, we study various type of nilpotency, recently introduced in \cite{catino2022nilpotency}, of these left semi-braces. 
\end{abstract}

\begin{keyword}
\texttt{set-theoretic solution\sep Yang-Baxter equation\sep semi-brace \sep brace \sep skew brace}
\MSC[2020] 
16T25\sep 81R50 
\end{keyword}
\end{frontmatter}

\section*{Introduction}

The Yang-Baxter equation, that comes from the papers due by Yang \cite{yang1967} and Baxter \cite{baxter1972partition}, is a basic equation of statistical mechanics. Classifying the solutions of the Yang-Baxter equation seems to be hard and it is still an open problem. In 1992 Drinfel'd \cite{drinfeld1992some} moved the interest of several researchers for finding and classifying a particular class of solutions of this equation, the so-called set-theoretic solutions. Namely, a \emph{set-theoretic solution of the Yang-Baxter equation}, or simply a \emph{solution}, on a non-empty set $X$ is a pair $\left(X,r\right)$, where 
$r:X\times X\to X\times X$ is a map such that the relation
\begin{align*}
\left(r\times\id_X\right)
\left(\id_X\times r\right)
\left(r\times\id_X\right)
= 
\left(\id_X\times r\right)
\left(r\times\id_X\right)
\left(\id_X\times r\right)
\end{align*}
is satisfied.  
Writing a solution $(X,r)$ as $r\left(x,y\right) = \left(\lambda_x\left(y\right)\rho_y\left(x\right)\right)$, with
$\lambda_x, \rho_x$ maps from $X$ into itself, for every $x\in X$, we say that $(X, r)$ is \emph{left non-degenerate} if $\lambda_x\in \Sym_X$ for every $x\in X$, \emph{non-degenerate} if $\lambda_x,\rho_x \in \Sym_X$ for every $x\in X$, and \emph{involutive} if $r^2=\id_{X\times X}$. 
After the seminal papers of Gateva-Ivanova and Van den Bergh \cite{gateva1998semigroups}, Gateva-Ivanova and Majid \cite{gateva2008matched}, and Etingov, Schedler, and Soloviev \cite{etingof1998set}, the involutive non-degenerate solutions have been widely studied.\\
In order to find and classify these solutions, several algebraic structures were provided. In 2007 Rump \cite{rump2007braces} introduced \emph{left braces}, algebraic structures that include Jacobson radical rings. Left braces determine set-theoretic solutions, and conversely, given a set-theoretic solution $(X,r)$, one can associate a left brace $\mathcal{G}(X,r)$. As showed in several recent papers, left braces can be used to take informations on set-theoretic solutions such as retractability, indecomposability and simplicity (see for example \cite{CaCaSt20x,cedo2014braces,cedo2022new,rump2020one} and related references).\\
Recently, various generalizations of left braces were introduced to study set-theoretic solutions that are not necessarily involutive. In \cite{guarnieri2017skew}, Guarnieri and Vendramin introduced skew left braces to study bijective non-degenerate solutions. As one can expect, skew left braces give structural informations on bijective non-degenerate solutions (see, for example, \cite{acri2020retractability,CeSmVe19,guarnieri2017skew,smock2018skew}). Skew left braces were furtherly generalized  by Catino, Colazzo and Stefanelli in \cite{cacs3} to deal with left non-degenerate solutions, not necessarily bijective. Technically, they introduced the notion of \emph{left cancellative left semi-brace}, that is a triple $(B,+,\circ)$ such that $(B,+)$ is a left cancellative semigroup, $(B,\circ)$ is a group and the equality
\begin{equation}\label{equalsemi1}
    a\circ (b+c)=a\circ b+a\circ (a^-+c)
\end{equation}
holds for all $a,b,c\in B$ (where $a^-$ is the inverse of $a$ in $(B,\circ)$). In \cite{cacs3} these structures are simply called left semi-braces: here, we will use the prefix \emph{left cancellative} to distinguish them from the left semi-braces studied in \cite{JeVa19}, where the left cancellativity of $(B,+)$ is not required. If $(B,+)$ is a group, then $(B,+,\circ)$ is a skew left brace, and if in addition $(B,+)$ is abelian then $(B,+,\circ)$ is a left brace. For further generalizations of left braces, we refer the reader to \cite{brzezinski2017towards,catino2021set,catino2022set,catino2021inverse,colazzo2021algebraic} and related references. \\
By \cite[Theorem 9]{cacs3}, a left cancellative left semi-braces $(B,+,\circ)$ gives a left non-degenerate solution by $\lambda_a(b):=a\circ(a^-+b)$ and $\rho_b(a):=(a^-+b)^-\circ b$ for all $a,b\in B$. On the other hand, skew left braces and their generalizations have many connections with other topics such as Hopf-Galois extensions, regular subgroups, rings and near rings, left-orderable groups: for this reason, they increased of interest indipendently of the Yang-Baxter equation.\\ 
Several authors studied and classified large classes of skew left braces and investigated the ones having specific properties (see for example \cite{acri2020skew,bachiller2015classification,dietzel2021braces,rump2019construction,smoktunowicz2018engel,stefanello2022bi}).
At the same time, some other papers studied general effects of the "distributive law" to left braces and their generalizations. To this end, several unifying theories were developed using ring and group theoretical tools. In this context, some remarkable links between skew left braces and other existing algebraic structures, such as rings, heaps and near-rings, were recently discovered (see for example \cite{brzezinski2017towards,brzezinski2019trusses,brzezinski2022pre,rump2019set}).\\
In this paper, we follow these two tracks to investigate left cancellative left semi-braces, that represent, up to now, a quite unexplored field. Indeed, after the seminal papers of Catino, Colazzo and Stefanelli \cite{cacs3} and Jespers and Van Antwerpen \cite{JeVa19} , the only interesting results on left semi-braces were given in \cite{catino2022nilpotency} and in \cite{stefanelli2022affine}. In the first one, Catino, Ced\'o and Stefanelli introduced suitable concepts of nilpotency for left cancellative left semi-braces, extending the ones previously given for skew left braces; in the second one, Stefanelli extended the concept of affine structure on a group given by Rump in \cite{rump2019construction} for left braces to study left cancellative left semi-braces $(B,+,\circ)$ having a fixed group $H$ as $(B,\circ)$.\\

The first aim to this paper is to extend the link between skew left  braces and left near-rings given by Rump in \cite[Theorem 1]{rump2019set} to left cancellative left semi-brace. As skew left braces come from suitable left near-ring, we will show that left cancellative left semi-braces come from suitable \emph{left seminear-rings}, algebraic structures already present in literature (see \cite{samman1998topics} for an exhaustive introduction and further references on this topic) that include rings and left near-rings. As one can expect, our correspondence is consistent with \cite[Theorem 1]{rump2019set} in the case of skew left braces.\\
The second goal of the paper is to give a kind of counterpart to \cite[Section 3]{catino2022nilpotency}, where left cancellative left semi-braces having the set of idempotents as an ideal were studied, considering left cancellative left semi-braces $(B,+,\circ)$ for which the skew left brace $G$ contained in $B$ is the ideal given by $G:=Ker(\lambda_{|_E})$, i.e. $G$ coincides with the kernel of the action on the set of additive idempotents induced by the map $\lambda$. We will show that these left semi-braces come from a skew left brace $G$ and a groups semidirect product $G\rtimes_{\beta} H$ where $\beta$ acts on the skew left brace $G$. As a main application, we will use these left semi-braces to give a complete classification of left cancellative left semi-braces having size $pq$ (for arbitrary prime numbers $p,q$) and $2p^2$ (for an arbitrary prime number $p>2$) such that the set of additive idempotents $E$ is a Sylow subgroup of $(B,\circ)$.\\
In the final section, we study left and right nilpotency, recently introduced in \cite{catino2022nilpotency}, of the left semi-braces provided in this paper. In particular, we will show that left cancellative left semi-braces with $G:=Ker(\lambda_{|_E})$ are almost never right nilpotent. 
On the other hand, a large larte of these left semi-braces are right nil: in this way, we answer in negative to \cite[Question 1]{catino2022nilpotency}. Finally, we consider left nilpotency and we provide a family of left nilpotent left semi-braces that is different from the one provided in \cite[Section 5]{catino2022nilpotency}

\section{Basic definitions and results}

In this section, we introduce the basic definitions and results involving left semi-braces mainly contained in \cite{catino2022nilpotency,cacs3,JeVa19}.

\begin{defin}
A set $B$ with two binary operation $+$ and $\circ$ is said to be a \emph{left semi-brace} is $(B,+)$ is a semigroup, $(B,\circ)$ is a group and the equality
\begin{equation}\label{equalsemi}
    a\circ (b+c)=a\circ b+a\circ (a^-+c)
\end{equation}
holds for all $a,b,c\in B$, where $a^-$ is the inverse of $a$ in $(B,\circ)$ (here, we assume that the operation $\circ$ has higher precedence than the operation $+$).  A left semi-brace is said to be \emph{left cancellative} if $(B,+)$ is a left cancellative semigroup.
\end{defin}

\noindent The neutral element of $\circ$ will be denoted by $0$. The simpler example of left semi-brace is the one given by an arbitrary group $(E,\circ)$ together with the $+$ operation given by $a+b:=b$ for all $a,b\in E$. Following the terminology of \cite{cacs3}, we refer to this left semi-brace as the \emph{trivial left semi-brace} on the group $(E,\circ)$. We remark that trivial left semi-braces are different from the \emph{trivial skew left braces}: indeed, a trivial skew left brace on a group $(B,\circ)$ will be given by $a+b:=a\circ b$ for all $a,b\in B$.\\
Trivial left semi-braces clearly are left cancellative: through this paper, all the left semi-braces will be left cancellative. In this case, it is well-known that $0$ is an idempotent respect to the operation $+$ and hence it is a left identity for $(B,+)$.  Moreover, the additive structure has been completely determined, as we can see in the following result contained in \cite{cacs3}.

\begin{prop}[Section $2$ and Proposition $5$, \cite{cacs3}]\label{addstrut}
Let $B$ be a left cancellative left semi-brace. Then, the additive semigroup $(B,+)$ is the direct sum $G+E$, where $G:=B+0$ is a subgroup of $(B,\circ)$ and $E$ is the set of idempotents of $(B,+)$, which also is a subgroup of $(B,\circ)$. Moreover, $(G,+,\circ)$ is a skew left brace and $(E,+,\circ)$ is a trivial left semi-brace.
\end{prop}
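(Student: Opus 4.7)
The plan is to unpack the structure of $(B,+)$ step by step using only the compatibility identity \eqref{equalsemi} and the left cancellativity of $(B,+)$, working entirely from the defining axioms.

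First I would show that $0$ is a left identity for $+$. Setting $a=0$ in \eqref{equalsemi} gives
\begin{equation*}
b+c \;=\; 0\circ(b+c) \;=\; 0\circ b + 0\circ(0+c) \;=\; b + (0+c),
\end{equation*}
and left cancelling $b$ yields $0+c=c$ for every $c\in B$; in particular $0+0=0$, so $0\in E$. This also identifies $0$ as the unique left $+$-identity, which later guarantees that the projection $b\mapsto b+0$ is well-behaved.

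Next I would identify $G:=B+0$ as a $\circ$-subgroup carrying a skew left brace structure. Associativity together with $0+0=0$ shows that every $g\in G$ satisfies $g+0=g$, so $0$ is actually a two-sided identity on $G$. To obtain closure under $\circ$ and $\circ$-inverses, I would substitute $b=0$ in \eqref{equalsemi} to get $a\circ(0+c)=a+a\circ(a^-+c)$, and then choose $c$ of the form $a^-\circ g$ with $g\in G$ to recognise $g$ as a sum $a+\text{something}$ lying in $B+0$. Once $G$ is a $\circ$-subgroup, the equation \eqref{equalsemi} restricted to $G\times G\times G$ is exactly the skew left brace compatibility on $G$, so no extra work is needed there.

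Then I would address the decomposition and the structure on $E$. For $b\in B$ the natural candidates are $g:=b+0\in G$ and $e:=g^-\circ b$; I would verify $b=g+e$ by applying \eqref{equalsemi} with $a=g$ and exploiting the left identity property, and then verify $e\in E$ by a second application of \eqref{equalsemi} combined with left cancellativity. Uniqueness of the decomposition reduces, by applying the projection $(\cdot)+0$, to the statement that $e+0=0$ for $e\in E$, which follows from $e+e=e$ and left cancellation once we add $0$ on the left. Closure of $E$ under $\circ$ is obtained by computing
\begin{equation*}
(e_1\circ e_2) + (e_1\circ e_2) \;=\; e_1\circ\bigl(e_2 + e_1^-\circ(e_1\circ e_2)\bigr) \;=\; e_1\circ(e_2+e_2) \;=\; e_1\circ e_2,
\end{equation*}
and closure under $\circ$-inverses is similar; that $(E,+,\circ)$ is a trivial left semi-brace then follows because for $e_1,e_2\in E$ the identity \eqref{equalsemi} with $a=e_1$, $b=e_1^-$, $c=e_2$ collapses to $e_1+e_2=e_2$.

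The main obstacle I anticipate is the decomposition step: a priori there is no reason for $g^-\circ b$ to be $+$-idempotent, and teasing this out requires a delicate interplay between \eqref{equalsemi} (which interleaves $+$ and $\circ$) and the left cancellativity of $+$ (which is the only tool available to remove unwanted additive terms). Once this step is in place, the remaining closure, uniqueness, and structural statements are routine bookkeeping built on the same two tools.
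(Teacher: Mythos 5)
The paper offers no proof of this statement at all --- it is imported verbatim from \cite{cacs3} (Section 2 and Proposition 5) --- so your attempt can only be measured against the argument there, whose overall architecture (first $0+c=c$ via \eqref{equalsemi} with $a=0$ and left cancellation, then $G=B+0$, then the decomposition $B=G+E$, then the structure of $E$) you have correctly reproduced. The serious problem is the decomposition step, precisely the one you flag as the main obstacle: your candidate $e:=g^{-}\circ b$ with $g:=b+0$ does \emph{not} satisfy $b=g+e$ in general. Putting the middle entry equal to $0$ in \eqref{equalsemi} and using $0+c=c$, $a\circ 0=a$ gives $a\circ c=a+\lambda_a(c)$ for all $a,c$; taking $a=g$, $c=g^{-}\circ b$ yields $b=g+\lambda_g(g^{-}\circ b)$, so by left cancellativity the idempotent component is $\lambda_g(g^{-}\circ b)=g\circ(g^{-}+g^{-}\circ b)$, which coincides with $g^{-}\circ b$ only when $\lambda_g$ fixes that element. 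Concretely, in a semidirect product $T\rtimes_{\alpha}A$ of a trivial left semi-brace $T$ by a skew left brace $A$ with $\alpha$ nontrivial (cf.\ \cref{semidi}), for $b=(t,a)$ one computes $g=b+0=(0,a)$, $g^{-}\circ b=(\alpha_{a^{-}}(t),0)$ and $g+g^{-}\circ b=(\alpha_{a^{-}}(t),a)\neq b$ whenever $\alpha_{a^{-}}$ does not fix $t$.

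The final step is also wrong as stated: substituting $a=e_1$, $b=e_1^{-}$, $c=e_2$ into \eqref{equalsemi} gives $e_1\circ(e_1^{-}+e_2)=0+e_1\circ(e_1^{-}+e_2)$, a tautology, not $e_1+e_2=e_2$. The triviality of $(E,+)$ is in fact pure semigroup theory: for idempotents $e_1,e_2$ one has $e_1+(e_1+e_2)=(e_1+e_1)+e_2=e_1+e_2$, and cancelling $e_1$ on the left gives $e_1+e_2=e_2$; this also repairs your closure computation for $E$ under $\circ$, which as written presupposes $e_1^{-}+e_2=e_2$, i.e.\ that $e_1^{-}\in E$, the very thing being proved. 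Two smaller slips: $0$ is \emph{not} the unique left identity of $(B,+)$ (in a trivial left semi-brace every element is a left identity), and adding $0$ on the left of $e+e=e$ does not produce $e+0=0$; that identity follows instead from observing that $e+0$ is an idempotent lying in $B+0$ together with $G\cap E=\{0\}$, the latter being immediate from $x+x=x=x+0$ and left cancellation.
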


\begin{conv}
 From now on, given a left cancellative left semi-brace $(B,+,\circ)$, we will indicate by $G$ the skew left brace contained in $B$ and by $E$ the trivial left semi-brace given by the additive idempotents.
\end{conv}

\smallskip

\noindent For our scopes, the map $\lambda$, introduced in \cite{cedo2014braces} for left braces and generalized in \cite{cacs3,JeVa19} for left semi-braces, is of crucial importance.

\begin{prop}[Propositions 3-7, \cite{cacs3}]\label{lambdasemi}
Let $B$ be a left cancellative left semi-brace. Then, the map $\lambda_a$ given by $\lambda_a(b):=a\circ (a^-+b)$ is an automorphism of $(B,+)$ and give rise to a homomorphism $\lambda$ from $(B,\circ)$ to $Aut(B,+)$ by $\lambda(a):=\lambda_a$ for all $a\in B$. Moreover, the following properties hold:
\begin{itemize}
    \item[-] $\lambda_a(E)=E$ for all $a\in B$;
    \item[-] $\lambda_a(0)=0$ if and only if $a\in G$.
\end{itemize}
\end{prop}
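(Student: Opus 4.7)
The plan is to verify in turn: additivity of $\lambda_a$; the homomorphism law $\lambda_{a\circ b}=\lambda_a\lambda_b$, which together with $\lambda_0=\mathrm{id}$ forces every $\lambda_a$ to be bijective; and finally the two bulleted properties.

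For additivity the key computation is
$$\lambda_a(b)+\lambda_a(c)=a\circ(a^-+b)+a\circ(a^-+c)=a\circ\bigl((a^-+b)+c\bigr),$$
where the second equality is \eqref{equalsemi} applied with $a^-+b$ in the role of $b$. Associativity of $+$ then gives $a\circ(a^-+(b+c))=\lambda_a(b+c)$.

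For the homomorphism law I compute
$$\lambda_{a\circ b}(c)=a\circ b\circ\bigl(b^-\circ a^-+c\bigr),\qquad\lambda_a\lambda_b(c)=a\circ\bigl(a^-+b\circ(b^-+c)\bigr),$$
so it suffices to show $a^-+b\circ(b^-+c)=b\circ(b^-\circ a^-+c)$. Applying \eqref{equalsemi} to the right-hand side with parameters $b,\ b^-\circ a^-,\ c$ yields $b\circ(b^-\circ a^-)+b\circ(b^-+c)$, and the telescoping $b\circ(b^-\circ a^-)=a^-$ together with the definition of $\lambda_b$ finishes the identity. Since $\lambda_0(b)=0\circ(0+b)=b$ (as $0$ is a left identity for $+$), the homomorphism relation yields $\lambda_{a^-}=\lambda_a^{-1}$, so each $\lambda_a$ lies in $\Aut(B,+)$ and $\lambda\colon(B,\circ)\to\Aut(B,+)$ is a group homomorphism.

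The invariance $\lambda_a(E)=E$ is then immediate, because an additive automorphism and its inverse both preserve the set of $+$-idempotents. For the kernel description I would invoke \cref{addstrut}: since $G=B+0$ and $0$ is $+$-idempotent, one checks that $b\in G\iff b+0=b$. If $a\in G$ then $a^-\in G$, hence $a^-+0=a^-$ and $\lambda_a(0)=a\circ a^-=0$; conversely, $\lambda_a(0)=0$ forces $a^-+0=a^-$, so $a^-\in G$ and therefore $a\in G$. The main obstacle is spotting the telescoping step $b\circ(b^-\circ a^-)=a^-$ inside the homomorphism calculation: once one sees that \eqref{equalsemi} must be applied with the twisted input $b^-\circ a^-$, everything collapses to the desired identity.
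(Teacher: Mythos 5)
This proposition is imported from \cite{cacs3} (Propositions 3--7) and the paper gives no proof of its own, so there is nothing to compare against; your direct verification is correct and is essentially the standard argument. Every step checks out: additivity comes from \eqref{equalsemi} applied to the triple $(a,\,a^-+b,\,c)$, the homomorphism law reduces to $b\circ(b^-\circ a^-+c)=a^-+b\circ(b^-+c)$ exactly as you compute, bijectivity follows from $\lambda_0=\id$ (using that $0$ is a left identity for $+$, which the paper records just before \cref{addstrut}), and the kernel description rests on the characterization $b\in G\iff b+0=b$, which is legitimate here since \cref{addstrut} defines $G=B+0$ and $0+0=0$.
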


\noindent We remark that the previous proposition give us an important information: the $\lambda$-action of $B$ induces an action of $(B,\circ)$ on $(E,+)$, i.e. a homomorphism from $(B,\circ)$ to $Aut(E,+)$, which is isomorphic to $Sym(|E|)$.\\
As for left braces and skew left braces, the notion of ideal of a left cancellative left semi-brace, which we recall below, is strictly related to the map $\lambda$.

\begin{defin}[Definition $17$, \cite{cacs3}]
Let $(B,+,\circ)$ be a left cancellative left semi-brace and $I$ a subset of $B$. Then, $I$ is an \emph{ideal} of $B$ if the following conditions hold:
\begin{itemize}
    \item[1.] $I$ is a normal subgroup of $(B,\circ)$;
    \item[2.] $I\cap G$ is a normal subgroup of $(G,+)$;
    \item[3.] $(n^-+b)^-\circ b\in I$ for all $b\in B$ and $n\in I\cap G$;
    \item[4.] $\lambda_g(e)\in I$ for all $g\in G$ and $e\in I\cap E$.

\end{itemize}

\end{defin}

\noindent By the previous definition, the set of idempotents $E$ is an ideal if and only if it is a normal subgoup of $(B,\circ)$. Moreover, by \cite[Proposition 7]{cacs3} it follows that $G$ also is an ideal if and only if it is a normal subgroup of $(B,\circ)$.

\smallskip 

\noindent In order to construct new examples of left cancellative left semi-braces, in \cite{cacs3} a suitable version of the semidirect product for left cancellative left semi-braces was provided.

\begin{prop}
Let $B_1,B_2$ be left cancellative left semi-braces and $\alpha$ a homomorphism from $B_2$ into the group of automorphisms of the left cancellative left semi-brace $B_1$. Then, the binary operations given by 
$$(b_1,c_1)+(b_2,c_2):=(b_1+b_2, c_1+c_2) $$
$$(b_1,c_1)\circ (b_2,c_2):=(b_1\circ \alpha_{c_1}(b_2),c_1\circ c_2) $$
for all $(b_1,c_1),(b_2,c_2)\in B_1\times B_2$ give rise to a left cancellative left semi-brace which we call the semidirect product of $B_1$ and $B_2$ via $\alpha$ and we denote it by $B_1\rtimes_{\alpha} B_2$.
\end{prop}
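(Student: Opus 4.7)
The plan is to verify, component by component, the three axioms of a left cancellative left semi-brace for the product $B_1\times B_2$ equipped with the two operations displayed. Throughout I would use in an essential way that $\alpha_c$ is an automorphism of $B_1$ as a semi-brace, hence preserves both $+$ and $\circ$, together with the fact that $c\mapsto\alpha_c$ is a group homomorphism, so that $\alpha_{c_1\circ c_2}=\alpha_{c_1}\circ\alpha_{c_2}$ and $\alpha_0=\id$.

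That $(B_1\times B_2,+)$ is a left cancellative semigroup is immediate from the componentwise definition and the corresponding properties of $(B_1,+)$ and $(B_2,+)$. For $(B_1\times B_2,\circ)$ I would invoke the standard semidirect product of groups: restricting $\alpha$ to the $\circ$-operations yields a homomorphism $(B_2,\circ)\to\Aut(B_1,\circ)$, so the formula defines a group with neutral element $(0,0)$ and inverse $a^-=(\alpha_{a_2^-}(a_1^-),\,a_2^-)$ for $a=(a_1,a_2)$. I would record this inverse formula explicitly since it is the input to the main calculation.

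The core step is the brace compatibility $a\circ(b+c)=a\circ b+a\circ(a^-+c)$. With $a=(a_1,a_2)$, $b=(b_1,b_2)$, $c=(c_1,c_2)$, unfolding the left-hand side and applying the additivity of $\alpha_{a_2}$ gives
\[
a\circ(b+c)=\bigl(a_1\circ(\alpha_{a_2}(b_1)+\alpha_{a_2}(c_1)),\,a_2\circ(b_2+c_2)\bigr),
\]
after which the semi-brace axiom in $B_1$ applied to the first coordinate and in $B_2$ applied to the second coordinate yield precisely the two coordinates of $a\circ b+a\circ(a^-+c)$. The pivotal identity on the right-hand side is
\[
\alpha_{a_2}\!\bigl(\alpha_{a_2^-}(a_1^-)+c_1\bigr)=a_1^-+\alpha_{a_2}(c_1),
\]
which follows from additivity of $\alpha_{a_2}$ together with $\alpha_{a_2}\circ\alpha_{a_2^-}=\alpha_0=\id$.

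The main obstacle, if any, is the bookkeeping around $\alpha$: one must unpack the inverse in the semidirect group carefully so that when $\alpha_{a_2}$ is applied to the first coordinate of $a^-+c$, the composition $\alpha_{a_2}\circ\alpha_{a_2^-}$ collapses to the identity. Once this reduction is in place, the verification is a routine two-coordinate calculation in which each coordinate of the required identity reduces to the semi-brace axiom of the corresponding factor.
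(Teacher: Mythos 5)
Your verification is correct: the componentwise argument for the additive and multiplicative structures, the explicit inverse $(\alpha_{a_2^-}(a_1^-),a_2^-)$, and the key collapse $\alpha_{a_2}\alpha_{a_2^-}=\alpha_0=\id$ combined with additivity of $\alpha_{a_2}$ are exactly what is needed to reduce the compatibility axiom to the semi-brace axioms of the two factors. The paper states this proposition without proof (it is recalled from the earlier work of Catino, Colazzo and Stefanelli), and your direct check is the standard argument.
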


\noindent If $E$ is an ideal of $B$, then the whole left cancellative left semi-brace can be described in terms of a particular semidirect product, as showed in \cite{catino2022nilpotency}.

\begin{prop}[Theorem 16, \cite{catino2022nilpotency}]\label{semidi}
Let $B$ be a left cancellative left semi-brace. Then, $E$ is an ideal of $B$ if and only if $B$ is isomorphic to a semidirect product of left cancellative left semi-braces $T\rtimes A$ of a trivial left semi-brace $T$ by a skew left brace $A$.
\end{prop}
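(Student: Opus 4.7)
I would prove both implications, starting with the easier $(\Leftarrow)$. Assuming $B = T \rtimes_\alpha A$, I identify the intrinsic subsets: inspecting additive idempotents in the semidirect product gives $E = T \times \{0_A\}$, while $B + 0 = \{0_T\} \times A =: G$, so $E \cap G = \{0\}$. A direct conjugation computation shows $(t, a) \circ (e, 0_A) \circ (t, a)^{-1} \in E$, so $E$ is normal in $(B, \circ)$. The remaining ideal axioms are automatic since $E \cap G = \{0\}$ and $\lambda_b(E) \subseteq E$ for every $b \in B$ by Proposition~\ref{lambdasemi}.

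For $(\Rightarrow)$, assume $E$ is an ideal, equivalently that $E$ is normal in $(B, \circ)$. I would take $T := E$ with its trivial semi-brace structure, $A := G$ with its skew brace structure, and define $\alpha \colon G \to \Aut(E)$ by conjugation $\alpha_g(e) := g \circ e \circ g^{-}$ (well-defined by normality; since $(E, +)$ is trivial, every group automorphism of $(E, \circ)$ is a semi-brace automorphism). As preparation, I collect three facts that hold in any left cancellative left semi-brace from $B = G + E$: $e + 0 = 0$ for $e \in E$, $g + 0 = g$ for $g \in G$, and $(e + g) + 0 = e + g$, so that $e + g \in G$. These combine to give the crucial identity $e + g = g$: indeed $(e + g) + (-g) = e + (g + (-g)) = e + 0 = 0$, and $e + g \in G$ then forces $e + g = g$ in the group $(G, +)$. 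Next, using the $\lambda$-homomorphism property, I evaluate $\lambda_{g \circ e \circ g^{-}} = \lambda_g \lambda_e \lambda_g^{-1}$ at $0$ and use $\lambda_g(0) = 0$ for $g \in G$ together with $\lambda_{e'}(0) = e'$ for $e' \in E$ to conclude $\lambda_g|_E = \alpha_g$. A by-product is $e \circ g = g + e$ for $e \in E$, $g \in G$, obtained by rewriting $e \circ g = g \circ (g^{-} \circ e \circ g)$ and applying $\lambda_g=\alpha_g$.

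With these identities in hand, I define $\phi \colon T \rtimes_\alpha A \to B$ by $\phi(e, g) := g + e$, which also equals $e \circ g$. Bijectivity follows from the direct sum $B = G + E$. Preservation of $+$ reduces, via $e + g = g$, to the computation $(g_1 + e_1) + (g_2 + e_2) = (g_1 + g_2) + e_2$. Preservation of $\circ$, via $\phi(e, g) = e \circ g$ and the definition of $\alpha$, reduces to the rewriting $e_1 \circ g_1 \circ e_2 \circ g_2 = (e_1 \circ \alpha_{g_1}(e_2)) \circ (g_1 \circ g_2)$. I expect the main obstacle to be identifying that $\alpha$ must be conjugation and showing $\lambda_g|_E$ coincides with it; once that is in place, the remaining verifications are short manipulations with the identities above.
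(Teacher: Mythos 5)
Your argument is correct. Note that the paper itself offers no proof of this statement: it is imported verbatim as Theorem~16 of \cite{catino2022nilpotency}, so there is no in-paper argument to compare against; your write-up is a valid self-contained derivation along the expected lines. The three load-bearing points are all in place and all check out: (i) the reduction of the ideal condition for $E$ to normality of $E$ in $(B,\circ)$ (the other three ideal axioms are vacuous or follow from \cref{lambdasemi} since $E\cap G=\{0\}$); (ii) the identification $\lambda_g|_E=\alpha_g$ via $\lambda_{g\circ e\circ g^-}(0)=g\circ e\circ g^-$ together with $\lambda_g(0)=0$ and $\lambda_e(0)=e$, which is exactly where normality enters; and (iii) the identities $e+g=g$ and $e\circ g=g+e$, which make $\phi(e,g)=g+e=e\circ g$ simultaneously additive and multiplicative. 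One cosmetic remark: your forward direction produces $B\cong E\rtimes_\alpha G$ with $G$ acting on $E$ by conjugation, which in the paper's notational convention ($\alpha\colon B_2\to\Aut(B_1)$ for $B_1\rtimes_\alpha B_2$) is precisely the asserted $T\rtimes A$ with $T$ trivial and $A$ a skew left brace, so the orientation of the product matches the statement.
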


\noindent As a simple consequences of the previous proposition, we obtain the following results.

\begin{cor}\label{consesemi}
Let $B$ be a left cancellative left semi-brace such that $(B,\circ)$ is an abelian group. Then, $B$ is isomorphic to the direct product of the trivial left semi-braces $E$ and the skew left brace $G$. 
\end{cor}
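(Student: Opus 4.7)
The plan is to combine Proposition \ref{semidi} with the commutativity hypothesis on $(B,\circ)$. First I would verify that $E$ is an ideal of $B$. By Proposition \ref{addstrut}, $E$ is already a subgroup of $(B,\circ)$; since $(B,\circ)$ is abelian, $E$ is automatically normal in $(B,\circ)$, and by the remark immediately following the definition of ideal this is exactly what is needed for $E$ to be an ideal. Proposition \ref{semidi} then yields an isomorphism $B\cong T\rtimes_{\alpha} A$, where $T$ is the trivial left semi-brace on $E$, $A$ is the skew left brace $G$, and $\alpha$ is some action of $G$ on $E$ by semi-brace automorphisms.

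The key step is to show that this action $\alpha$ is trivial, so that the semidirect product collapses to a direct product. The multiplication on $T\rtimes_{\alpha} A$ is $(e_1,g_1)\circ(e_2,g_2)=(e_1\circ\alpha_{g_1}(e_2),\,g_1\circ g_2)$. Since $(B,\circ)$ is abelian, comparing $(e_1,g_1)\circ(e_2,g_2)$ with $(e_2,g_2)\circ(e_1,g_1)$ gives
\[
e_1\circ\alpha_{g_1}(e_2)=e_2\circ\alpha_{g_2}(e_1)
\]
for all $e_1,e_2\in E$ and $g_1,g_2\in G$. Specializing to $g_2=0$ and $e_1=0$, and using that $\alpha_0=\id$ together with $\alpha_{g_1}(0)=0$ (since $\alpha_{g_1}$ is a semi-brace automorphism fixing the identity of the $\circ$-group on $E$), one obtains $\alpha_{g_1}(e_2)=e_2$ for all $g_1\in G$ and $e_2\in E$. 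Hence $\alpha$ is the trivial action.

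With $\alpha$ trivial, the semidirect product $E\rtimes_{\alpha}G$ reduces to the direct product $E\times G$ of left cancellative left semi-braces, and the isomorphism supplied by Proposition \ref{semidi} becomes the desired isomorphism $B\cong E\times G$. I do not foresee any serious obstacle here; the only point that requires a bit of care is the bookkeeping between the abstract action $\alpha$ produced by Proposition \ref{semidi} and the identifications of $E$ and $G$ inside $B$, but this identification is already part of the statement of that proposition.
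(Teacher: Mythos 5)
Your proof is correct and follows essentially the same route as the paper, which simply invokes \cref{semidi} after noting that an abelian $(B,\circ)$ forces $E$ to be normal, hence an ideal. Your additional verification that the action $\alpha$ must be trivial (by comparing $(e_1,g_1)\circ(e_2,g_2)$ with $(e_2,g_2)\circ(e_1,g_1)$ and specializing $e_1=0$, $g_2=0$) is a detail the paper leaves implicit, and it is carried out correctly.
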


\begin{proof}
It follows by \cref{semidi}.
\end{proof}

\begin{cor}\label{consesemi2}
Let $B$ be a left cancellative left semi-brace such that $(B,\circ)$ is a cyclic group having prime-power size or infinite size. Then, $B$ is a skew left brace or $B$ is a trivial left semi-brace.
\end{cor}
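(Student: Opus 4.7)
The plan is to reduce the statement to \cref{consesemi} and then to a purely group-theoretic observation about cyclic groups of prime-power or infinite order. Since $(B,\circ)$ is cyclic it is in particular abelian, so \cref{consesemi} applies and yields that $B$ is isomorphic to the direct product of the trivial left semi-brace $E$ and the skew left brace $G$. Because the multiplicative operation of the semi-brace direct product is coordinate-wise, this packages up as an internal direct product decomposition of groups $(B,\circ)\cong (E,\circ)\times (G,\circ)$.

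I would then invoke the elementary fact that a cyclic group of prime-power order $p^n$ has its subgroups linearly ordered by inclusion, so any two nontrivial subgroups intersect nontrivially; the same holds for $\mathbb{Z}$, since any two nonzero subgroups $m\mathbb{Z}$ and $n\mathbb{Z}$ share the nonzero element $mn$. Consequently, in either setting an internal direct product decomposition of $(B,\circ)$ must have one factor trivial, which forces either $E=\{0\}$ or $G=\{0\}$.

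Finally, I would unpack the two cases using \cref{addstrut}. If $G=\{0\}$, then $B=E$ and the additive structure on $B$ is the trivial one $a+b=b$, so $B$ is a trivial left semi-brace. If instead $E=\{0\}$, then $B=G$ is a skew left brace. I do not expect any real obstacle here: \cref{consesemi} already absorbs all of the semi-brace content, and what remains is the standard structural remark on cyclic groups.
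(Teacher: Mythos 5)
Your proposal is correct and follows essentially the same route as the paper: apply \cref{consesemi} to get $B\cong G\times E$ and then observe that a cyclic group of prime-power or infinite order admits no nontrivial direct product decomposition, forcing $|E|=1$ or $|G|=1$. You merely spell out the group-theoretic step (linearly ordered subgroups in the $p$-power case, $mn\in m\mathbb{Z}\cap n\mathbb{Z}$ in the infinite case) that the paper leaves implicit.
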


\begin{proof}
If $B$ is a left cancellative left semi-brace such that $(B,\circ)$ is a cyclic group having prime-power size, by \cref{consesemi} $B$ is the direct product of the left semi-braces $E$ and $G$, therefore $|E|=1$ or $|G|=1$, hence the thesis. The thesis similarly follows if $(B,\circ)$ is an infinite cyclic group.
\end{proof}

\medskip 

\noindent As we said before, the additive structure of a left cancellative left semi-brace has been completely determined. This fact has an important consequence on the multiplicative structure.

\begin{prop}\label{fatt}
Let $B$ be a left cancellative left semi-brace. Then, $B=G\circ E$ and, if $B$ is finite, then $|B|=|G|\cdotp |E|$.
\end{prop}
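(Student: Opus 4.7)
The plan is to pass between additive and multiplicative decompositions using a simple consequence of the semi-brace axiom, and then read off both statements from the already-established additive structure.

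First I would extract the key identity $a\circ c = a + \lambda_a(c)$, valid for all $a,c\in B$. This follows by specializing the defining equation \eqref{equalsemi} with $b=0$: since $0+c=c$ (because $0$ is a left identity for $+$) and $a\circ 0 = a$, the axiom gives
\begin{equation*}
a\circ c \;=\; a\circ(0+c) \;=\; a\circ 0 + a\circ(a^-+c) \;=\; a + \lambda_a(c).
\end{equation*}
This identity is the bridge between the $+$- and $\circ$-decompositions, and deriving it is really the only nontrivial step; everything else is bookkeeping.

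Next I would prove $B = G\circ E$. Given $b\in B$, \cref{addstrut} writes $b = g+e$ uniquely with $g\in G$ and $e\in E$. By \cref{lambdasemi}, $\lambda_{g^-}$ is an automorphism of $(B,+)$ that preserves $E$, so $e':=\lambda_{g^-}(e)\in E$. Applying the identity above with $a=g$ and $c=e'$ gives
\begin{equation*}
g\circ e' \;=\; g + \lambda_g(e') \;=\; g + \lambda_g\lambda_{g^-}(e) \;=\; g+e \;=\; b,
\end{equation*}
where I used that $\lambda$ is a homomorphism from $(B,\circ)$ into $\Aut(B,+)$. Hence $b\in G\circ E$, and the reverse inclusion is automatic.

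Finally, for the finite case I would establish $G\cap E=\{0\}$ and conclude. If $b\in G\cap E$, then the uniqueness of the decomposition $b = g+e$ applied once to $b=b+0$ (viewing $b\in G$) and once to $b=0+b$ (viewing $b\in E$, recalling $0$ is a left identity) forces $b=0$. Since $G$ and $E$ are both subgroups of the group $(B,\circ)$ with trivial intersection, the map $G\times E\to B$, $(g,e)\mapsto g\circ e$, is injective by the standard argument ($g_1\circ e_1=g_2\circ e_2\Rightarrow g_2^-\circ g_1 = e_2\circ e_1^-\in G\cap E=\{0\}$). Combined with $B=G\circ E$ from the previous paragraph, this yields $|B|=|G|\cdot|E|$ when $B$ is finite. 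The only delicate point in the whole argument is the identity $a\circ c=a+\lambda_a(c)$; once it is in hand, the rest is immediate from \cref{addstrut,lambdasemi}.
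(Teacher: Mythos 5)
Your proof is correct and follows essentially the same route as the paper: both write $b=g+e$ via \cref{addstrut} and convert it to $g\circ\lambda_{g^-}(e)\in G\circ E$ (the paper inserts $g\circ g^-$ directly, whereas you first derive the equivalent identity $a\circ c=a+\lambda_a(c)$ from the axiom with $b=0$). The only divergence is the cardinality count, where the paper reads $|B|=|G|\cdot|E|$ off the additive direct sum while you re-derive it multiplicatively from $G\cap E=\{0\}$ and the subgroup structure of $(B,\circ)$; both are valid one-line arguments.
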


\begin{proof}
Let $b\in B$. Then, by \cref{addstrut} there exist $g\in G$ and $e\in E$ such that $b=g+e.$ Therefore $b=g+e=g\circ g^-\circ (g+e)=g\circ \lambda_{g^-}(e)$, where $\lambda_{g^-}(e)\in E $ by \cref{lambdasemi}. The last part follows since $(B,+)$ is the direct sum of $(G,+)$ and $(E,+)$.
\end{proof}

\section{Left seminear-rings and left semi-braces}

In this section, we show the correspondence between left cancellative left semi-braces and left seminear-rings, which extends the one given by Rump in \cite{rump2019set} between skew left braces and left near-rings.

\medskip

We start recalling basic definitions and results on left seminear-rings.

\begin{defin}
    A triple $(S,+,\cdotp)$ is said to be a \emph{left seminear-ring} if $(S,+)$ and $(S,\cdotp)$ are semigroups and $a\cdotp (b+c)=a\cdotp b+a\cdotp c$ for all $a,b,c\in S$. Moreover, a seminear ring is said to be \emph{unital} if $(S,\cdotp)$ is a monoid: in this case, the neutral element will be indicated by $1$.
\end{defin}

\noindent An element $o$ of a left seminear-ring $(S,+,\cdotp)$ is said to be a \emph{right zero} if $s\cdotp o=o$ for all $s\in S$.

\begin{exs}\label{essemi}
\begin{itemize}
    \item[1)] Every left near-ring is a left seminear-ring.
    \item[2)] Let $(S,+)$ be a semigroup and $M(S)$ be the set of maps from $S$ to itself. Moreover, let $+$ and $\cdotp$ the binary operations on $M(S)$ given by $(f+g)(x):=f(x)+g(x)$ and $(f\cdotp g)(x):=g(f(x))$ for all $x\in S,$ and $f,g\in M(S).$ Then, the triple $(M(S),+,\cdotp)$ is a left seminear-ring having the identity function as unit. Moreover, any constant function is a right zero.
\end{itemize}

\end{exs}

\begin{defin}
        Let $(S,+,\cdotp)$ be a left seminear-ring. A subset $T$ of $S$ is said to be a \emph{left subseminear-ring} if it is closed under the operations $+$ and $\cdotp$.
\end{defin}

\noindent The left subseminear-ring provided in the following proposition is of crucial importance for the goal of this section.

\begin{prop}
    Let $S$ be a left seminear ring and $o$ a right zero of $(S,+)$. Then, the set $S_o:=\{o\cdotp a | a\in S\}$ is a left subseminear-ring of $S$ containing $o$
\end{prop}

\begin{proof}
    Straightforward.
\end{proof}

\noindent To give the desired correspondence, we introduce the concept of exponential of left seminear-ring. As one can expect, our definition is consistent with the one given by Rump in \cite[Definition $6$]{rump2019set} for left near-rings. 

\begin{defin}
    Let $S$ be a left seminear-ring. An \emph{exponential} of $S$ is a pair $(o,e)$ where $o$ is an additively idempotent right zero and $e$ a homomorphism from $(S_o,+)$ to $(S,+)$ with $o\cdotp e(s)=s$ for all $s\in S$, $1+e(o)=1$ and such that the elements $1+e(s)$, with $s\in S$, form a subgroup of $(S,\cdotp)$.
\end{defin}

\noindent Now we are able to show the main results of the section. At first, we show that left seminear-rings provide left semi-braces.


\begin{prop}
    Let $(S,+,\cdotp)$ be a unital left seminear-ring and $(o,e)$ be an exponential of $S$. Moreover, suppose that $(S,+)$ is a left cancellative semigroup. Then, the triple $(S_o,+,\circ)$, where the operation $\circ$ is given by $s\circ t:=s\cdotp (1+e(t))$ for all $s,t\in S_o$, is a left cancellative left semi-brace.
\end{prop}

\begin{proof}
    Since $(o,e)$ is exponential, if $s,t\in S_o$, there exists an element that we indicate by $s\circ' r$ such that $1+e(s\circ' r)=(1+e(s))\cdotp (1+e(r))=1+e(s)+(1+e(s))\cdotp e(r)$. Moreover, since $(S,+)$ is left cancellative we have that $e(s\circ' r)= e(s)+(1+e(s))\cdotp e(r)$, therefore it follows that
    \begin{eqnarray}
        s\circ'r&=&o\cdotp e(s\circ'r) \nonumber \\
        &=& o\cdotp (e(s)+(1+e(s))\cdotp e(r)) \nonumber \\
        &=& o\cdotp e(s)+(o\cdotp 1+o\cdotp e(s))\cdotp e(r)) \nonumber \\
        &=& s+s\cdotp e(r) \nonumber \\
        &=& s\cdotp (1+e(r)) \nonumber 
    \end{eqnarray}
where the fourth equality follows because $o\cdotp 1$, which is equal to the idempotent $o$, is a left identity of the left cancellative semigroup $(S,+)$. Now, if we set $s\circ r:=s\circ' r$ for all $r,s\in S_o$ we obtain that
    \begin{eqnarray}
        (s\circ r)\circ t=(s\cdotp (1+e(r)))\cdotp (1+e(t)) &=& (o\cdotp (1+e(s))\cdotp (1+e(r)))\cdotp (1+e(t)) \nonumber \\
        &=& o\cdotp (1+e(s))\cdotp (((1+e(r))\cdotp (1+e(t))) \nonumber \\
        &=& o\cdotp (1+e(s))\cdotp (1+e(r\circ t))\nonumber \\
        &=& s\circ (r\circ t) \nonumber
    \end{eqnarray}
    for all $r,s,t\in S_o$, hence $(S_o,\circ)$ is a semigroup. Moreover, $o\circ r=o\cdotp (1+e(r))=r$ and $r\circ o=r\cdotp (1+e(o))=r$ for all $r\in S_o$, therefore $(S_o,\circ)$ is a monoid having $o$ as neutral element. Since $(o,e)$ is an exponential, for every $s\in S_o$ there exist an element $s^-\in S_o$ such that $(1+e(s))\cdotp (1+e(s^-))=(1+e(s^-))\cdotp (1+e(s))=1$, and using this fact one can easily show that $(S_o,\circ)$ is a group. Since $(S,+)$ is left cancellative, then so $(S_o,+)$ is. Finally, if $r,s,t\in S_o$, we have that 
     \begin{eqnarray}
       s\circ (r+t)=s\cdotp (1+e(r+t)) &=& s\cdotp (1+e(r))+s\cdotp e(t) \nonumber \\
        &=& s\cdotp (1+e(r))+o+s\cdotp e(t) \nonumber \\
        &=& s\cdotp (1+e(r))+s\cdotp (1+e(s^-))+s\cdotp e(t)\nonumber \\
        &=& s\circ r+s\circ (s^-+t) \nonumber 
    \end{eqnarray}
    hence the thesis follows.
    \end{proof}

\noindent As a kind of converse of the previous proposition, we show that every left cancellative left semi-brace comes from a suitable left seminear-ring.

    \begin{prop}
        Let $(A,+,\circ)$ be a left cancellative left semi-brace. Then, there exist a left seminear-ring $(S,+,\cdotp)$ with an exponential $(o,e)$ such that $(A,+,\circ)\cong (S_o,+,\circ)$.
    \end{prop}

    \begin{proof}
        Let $(S,+,\cdotp)$ be the left seminear-ring on $M(A)$ defined as in 2) of \cref{essemi} using $(A,+)$.
        Clearly, $(A,+)$ can be identified with the set of the constant maps of $S$ and in this way $A$ coincides with $S_0$. Now, if we set $e:A\longrightarrow S$, $a\mapsto e(a)$, where $e(a)$ is the map given by $e(a)(b):=\lambda_b(a)$ for all $a,b\in A$, we obtain that $e$ is a homomorphism from $(A,+)$ to $(S,+)$ and
        $$a=a\circ 0 =a+\lambda_a(0)=(1+e(0))(a)$$
        for all $a\in A$, therefore $1+e(0)=1$. Now, by associativity of $(A,\circ)$ we obtain $(1+e(a\circ b))=(1+e(a))\cdotp (1+e(b))$, hence the elements of the form $(1+e(a))$ form a subgroup of $(S,\cdotp)$. Moreover, $0\cdotp e(a)(b)=e(a)(0)=\lambda_0(a)=a$ for all $a\in A$, hence $(e,0)$ is an exponential of $S$. Finally, since $a\circ b$ is equal to $a+\lambda_a(b)$ and by the identification $A=S_0$ we obtain that $a\circ b=a\cdotp (1+e(b))$ for all $a,b\in A$, therefore $(A,+,\circ)$ is isomorphic to the left cancellative semi-brace $(S_0,+,\circ)$.
    \end{proof}

\noindent Therefore, the previous propositions showed the following correspondence theorem.

    \begin{theor}
         Let $(S,+,\circ)$ be a left seminear-ring and $(o,e)$ be an exponential. Then, the triple $(S_o,+,\circ)$, where the operation $\circ$ is given by $s\circ t:=s\cdotp (1+e(t))$ for all $s,t\in S_o$, is a left cancellative left semi-brace. Conversely, every left cancellative left semi-brace arises in this way.
    \end{theor}

As a final remark of the section, we highlight that the previous theorem, replacing left seminear-ring by left near-ring and left cancellative left semi-brace by skew left brace, allow to recover Rump's correspondence given in \cite[Theorem 1]{rump2019set}.

\section{Left semi-braces with $G=Ker(\lambda_{|_E})$ and groups semidirect products}

In this section, we study the structure of left cancellative left semi-braces where the group $G$ acts trivially on the additive idempotents by the map $\lambda$. As a main result, we show such a left semi-braces are completely determined by a skew left brace and a groups semidirect product.

\medskip

\noindent By \cref{lambdasemi}, $\lambda$ induces an action of $(B,\circ)$ on $E$. From now on, we indicate this action by $\lambda_{|_E}$. We start showing that the kernel of $\lambda_{|_E}$ is contained in $G$.

\begin{lemma}\label{lem1}
Let $B$ be a left cancellative left semi-braces. Then, the kernel of $\lambda_{|_E}$ is contained in $G$.
\end{lemma}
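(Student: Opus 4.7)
The plan is very short because the hard work has already been done by \cref{lambdasemi}. The key observation is that the neutral element $0$ of $(B,\circ)$ is an additive idempotent (this is stated in the paragraph after \cref{addstrut}), so $0\in E$. Consequently, for any element $a$ belonging to the kernel of $\lambda_{|_E}$, the fact that $\lambda_a$ fixes every idempotent forces, in particular, $\lambda_a(0)=0$.

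At this point I would invoke the second bullet of \cref{lambdasemi}, which characterises exactly those $a\in B$ with $\lambda_a(0)=0$ as the elements of $G$. Combining the two observations yields $a\in G$, and hence $\ker(\lambda_{|_E})\subseteq G$.

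There is essentially no obstacle here: the statement is a direct corollary of \cref{lambdasemi} together with the remark that $0$ is an additive idempotent. I do not foresee needing the multiplicative factorisation $B=G\circ E$ of \cref{fatt} or the semidirect-product description of \cref{semidi} for this particular lemma; they will presumably be used in subsequent results about the structure of $\ker(\lambda_{|_E})$ itself.
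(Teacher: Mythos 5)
Your proof is correct, and it is in fact more direct than the one in the paper. You only need two facts: that $0$ is an additive idempotent (hence $0\in E$), and the equivalence in the second bullet of \cref{lambdasemi} stating that $\lambda_a(0)=0$ if and only if $a\in G$. Any $a$ in the kernel of $\lambda_{|_E}$ fixes every element of $E$, in particular $0$, so $a\in G$; nothing more is required.

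The paper takes a slightly longer route: it invokes the multiplicative factorisation $B=G\circ E$ from \cref{fatt} to write an arbitrary kernel element as $b=g\circ e$ with $g\in G$, $e\in E$, then evaluates $\lambda_{g\circ e}=\lambda_g\lambda_e$ on $0$ and uses $\lambda_g(0)=0$ together with $\lambda_e(0)=e$ to conclude $e=0$, hence $b=g\in G$. The underlying mechanism is the same --- in both arguments the decisive step is that a kernel element must fix $0$ and that fixing $0$ characterises membership in $G$ --- but your version dispenses with the factorisation entirely and leans on the ``if and only if'' already recorded in \cref{lambdasemi}. This buys brevity and fewer dependencies (you correctly observe that \cref{fatt} is not needed here); the paper's version has the minor advantage of exhibiting explicitly which factor of $b=g\circ e$ is forced to be trivial, which foreshadows the structural analysis of $\ker(\lambda_{|_E})$ in the subsequent results. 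Either proof is acceptable.
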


\begin{proof}
Let $b$ an element of $Ker(\lambda_{|_E})$. By \cref{fatt}, there exist $g\in G, e\in E$ such that $b=g\circ e$. Then $\lambda_{g\circ e}(f)=\lambda_g\lambda_e(f)=f$ for all $f\in E$, and this implies that $0=\lambda_g^{-1}(0)=\lambda_e(0)=e$, hence $Ker(\lambda_{|_E}) $ is contained in $G$.
\end{proof}

\noindent Now, we want to provide a structure theorem for the left cancellative left semi-braces having $G$ equal to $Ker(\lambda_{|_E})$. At first, we need a preliminary lemma.

\begin{lemma}\label{lemker1}
Let $B$ be a left cancellative left semi-braces, $e$ an element of $E$ and suppose that $G=Ker(\lambda_{|_E})$. Then, the map $\alpha_e:G\longrightarrow G$, $g\mapsto e\circ g\circ e^{-}$ is an element of $Aut(G,+,\circ)$.
\end{lemma}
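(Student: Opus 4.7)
The plan is to verify three things about $\alpha_e$: that it maps $G$ into itself, that it preserves $\circ$, and that it preserves $+$. The first two are automatic: since $G = Ker(\lambda_{|_E})$ is by definition the kernel of the homomorphism $\lambda_{|_E}\colon(B,\circ)\to \Sym(E)$ furnished by \cref{lambdasemi}, it is a normal subgroup of $(B,\circ)$, so conjugation by $e$ preserves $G$, and $\alpha_e$ is a group automorphism of $(G,\circ)$ with two-sided inverse $\alpha_{e^-}$. The whole content of the lemma is therefore to show that $\alpha_e$ is also an additive homomorphism.

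For this I would first record three auxiliary identities. \textbf{(F1)} For $g\in G$ and $f\in E$ one has $g+f=g\circ f$: indeed, the general relation $a^-+b=a^-\circ\lambda_a(b)$, obtained by unwinding the definition $\lambda_a(b)=a\circ(a^-+b)$, specialises to this on using the hypothesis $\lambda_{g^-}(f)=f$. \textbf{(F2)} For $e\in E$ and $c\in B$, $e\circ c=e+\lambda_e(c)$: this is \eqref{equalsemi} with $b=0$, together with the facts that $0$ is a left identity for $+$ and $e\circ 0=e$. \textbf{(F3)} For $e\in E$ and $g\in G$, $e\circ g=\alpha_e(g)+e$: by normality $\alpha_e(g)\in G$, so $e\circ g=\alpha_e(g)\circ e$, and (F1) finishes this.

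The core of the argument is then to compute $e\circ(g_1+g_2)$ in two ways. Applying (F3) to the element $g_1+g_2\in G$ gives $e\circ(g_1+g_2)=\alpha_e(g_1+g_2)+e$. On the other hand, the semi-brace axiom yields $e\circ(g_1+g_2)=e\circ g_1+e\circ(e^-+g_2)=e\circ g_1+\lambda_e(g_2)$, and applying (F3), (F2), (F3) in turn rewrites this as $\alpha_e(g_1)+e+\lambda_e(g_2)=\alpha_e(g_1)+e\circ g_2=\alpha_e(g_1)+\alpha_e(g_2)+e$. Comparing the two computations gives the equality $\alpha_e(g_1+g_2)+e=\alpha_e(g_1)+\alpha_e(g_2)+e$ in $(B,+)$.

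The subtlety to watch for is that this last step cannot be concluded by right-cancelling $e$, because only \emph{left} cancellativity of $(B,+)$ is part of the hypothesis. The way out is to invoke uniqueness of the direct sum decomposition $(B,+)=G+E$ from \cref{addstrut}: since $\alpha_e(g_1+g_2)$ and $\alpha_e(g_1)+\alpha_e(g_2)$ both lie in $G$ while the $E$-summand is $e$ on both sides, the $G$-parts must agree, whence $\alpha_e(g_1+g_2)=\alpha_e(g_1)+\alpha_e(g_2)$ and $\alpha_e\in\Aut(G,+,\circ)$.
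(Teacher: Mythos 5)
Your proof is correct and follows essentially the same route as the paper's: both establish additivity by expanding $e\circ(g_1+g_2)$ with the semi-brace axiom together with the identity $g+f=g\circ\lambda_{g^-}(f)=g\circ f$ for $g\in Ker(\lambda_{|_E})$ and $f\in E$. The only real difference is that the paper computes the conjugate $e\circ(g_1+g_2)\circ e^-$ in a single chain, which terminates exactly at $\alpha_e(g_1)+\alpha_e(g_2)$ with no trailing idempotent, whereas your version leaves a $+e$ on both sides and then, correctly recognizing that right cancellation is unavailable, removes it by appealing to the directness of the decomposition $B=G+E$.
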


\begin{proof}
Since $G$ is equal to the kernel of the groups homomorphism $\lambda_{|_E}:(B,\circ)\longrightarrow Aut(E,+) $, it is a normal subgroup of $(B,\circ)$ and  clearly $\alpha_e\in Aut(G,\circ)$. Therefore it is sufficient to show that $\alpha_e(g_1+g_2)=\alpha_e(g_1)+\alpha_e(g_2)$ for all $g_1,g_2\in G$. Indeed
\begin{eqnarray}
e\circ (g_1+g_2)\circ e^{-} &=& e\circ (g_1+g_2+\lambda_{g_1+g_2}(e^{-})) \nonumber \\
&=& e\circ (g_1+g_2+e^{-}) \nonumber \\
&=& e\circ (g_1+e^-+g_2+e^{-}) \nonumber \\
&=& e\circ (g_1\circ \lambda_{g_1^-}(e^-)+g_2\circ \lambda_{g_2^-}(e^{-})) \nonumber \\
&=& e\circ (g_1\circ e^-+g_2\circ e^{-}) \nonumber \\
&=&  e\circ g_1\circ e^- + e\circ (e^-+g_2\circ e^{-}) \nonumber \\
&=&  e\circ g_1\circ e^- + e\circ g_2\circ e^{-} \nonumber 
\end{eqnarray}
for all $g_1,g_2\in G$, hence the thesis.
\end{proof}

\noindent The previous lemma states that, given a left cancellative semi-brace $B$ such that $G=Ker(\lambda_{|_E})$, one can define a standard semidirect product of left semi-brace of $G$ by $E$ using the map $\alpha:E\longrightarrow Aut(G,+,\circ)$, $e\mapsto \alpha_e$. The next theorem shows that this semidirect product completely determines the structure of $B$.

\begin{theor}\label{teostrutt}
Let $B$ be a left cancellative left semi-braces. Then, $G$ is equal to $Ker(\lambda_{|_E})$ if and only if $B$ is isomorphic to a semidirect product of left semi-braces $A\rtimes T$ of a skew left brace $A$ by a trivial left semi-brace $T$.
\end{theor}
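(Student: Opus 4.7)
The plan is to prove both implications by manufacturing an explicit isomorphism between $B$ and the semidirect product.

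For the implication $(\Rightarrow)$, assume $G=Ker(\lambda_{|_E})$. By \cref{lemker1}, for every $e\in E$ the conjugation map $\alpha_e\colon G\to G$, $g\mapsto e\circ g\circ e^-$ lies in $\Aut(G,+,\circ)$, and the assignment $e\mapsto\alpha_e$ is a group homomorphism from $(E,\circ)$ into $\Aut(G,+,\circ)$ by the usual functoriality of conjugation. Since $E$ is a trivial left semi-brace, no further compatibility with $+$ on $E$ is required, so this data defines a semidirect product $G\rtimes_\alpha E$ of left cancellative left semi-braces in the sense of the proposition preceding \cref{fatt}. I then define $\phi\colon G\rtimes_\alpha E\to B$ by $\phi(g,e):=g\circ e$: surjectivity is \cref{fatt}, and injectivity follows from $G\cap E=\{0\}$, an immediate consequence of the unique decomposition of \cref{addstrut}.

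Checking that $\phi$ preserves $\circ$ is a direct expansion of the semidirect product multiplication using $\alpha_{e_1}(g_2)=e_1\circ g_2\circ e_1^-$. The main obstacle, and the only step where the hypothesis truly bites, is preservation of $+$. Since $G=Ker(\lambda_{|_E})$ gives $\lambda_g(e)=e$ for all $g\in G$, $e\in E$, the general identity $a\circ b=a+\lambda_a(b)$ specializes to $g\circ e=g+e$, so $\phi(g,e)=g+e$. Preservation of $+$ then reduces to the equality $(g_1+e_1)+(g_2+e_2)=(g_1+g_2)+e_2$, which in turn rests on the auxiliary claim that $e+g=g$ whenever $e\in E$ and $g\in G$. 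This claim actually holds in every left cancellative left semi-brace: the computation $(e+g)+0=e+(g+0)=e+g$ shows that $e+g\in B+0=G$, and then inside the group $(G,+)$ the identity $(e+g)+(-g)=e+0=0$ forces $e+g=g$.

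For the converse $(\Leftarrow)$, suppose $B\cong A\rtimes T$ with $A$ a skew left brace and $T$ a trivial left semi-brace. Since addition in the semidirect product is componentwise and $(A,+)$ is a group, the additive idempotents are $E=\{(0_A,t)\colon t\in T\}$, while $G=B+0=\{(a,0_T)\colon a\in A\}$. The semidirect product formulas for the multiplicative inverse and the $\circ$-product then give $\lambda_{(a,0_T)}((0_A,t))=(0_A,t)$, so $G\subseteq Ker(\lambda_{|_E})$; combined with \cref{lem1} this forces the desired equality.
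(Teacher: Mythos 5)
Your proof is correct and follows essentially the same route as the paper's: both directions hinge on \cref{lemker1} to form $G\rtimes_\alpha E$ via conjugation, identify $g\circ e$ with $g+e$ (your $\phi$ is just the inverse of the paper's map $f(g+e)=(g,e)$), and verify the two operations, with the converse settled by computing $\lambda$ on the idempotents. The only additions are cosmetic: you spell out the auxiliary identity $e+g=g$ that the paper absorbs into the direct-sum decomposition of \cref{addstrut}, and you invoke \cref{lem1} for one inclusion in the converse where the paper computes $Ker(\lambda_{|_E})$ outright.
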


\begin{proof}
Let $B$ be a left cancellative left semi-brace such that $G=Ker(\lambda_{|_E})$. Let $f$ be the map from $B$ to $G\rtimes_{\alpha} E$ given by $f(g+e):=(g,e)$ for all $g\in G$, $e\in E$. Clearly, $f$ is bijective and $$f((g_1+e_1)+(g_2+e_2))=f(g_1+g_2+e_2)=(g_1+g_2,e_2)$$
$$f(g_1+e_1)+f(g_2+e_2)=(g_1,e_1)+(g_2,e_2)=(g_1+g_2,e_2) $$
for all $g_1,g_2\in G$, $e_1,e_2\in E$, hence $f$ is an isomorphism between the additive semigroups. Moreover, we have that 
\begin{eqnarray}
(g_1+e_1)\circ (g_2+e_2)&=& g_1\circ \lambda_{g_1^{-}}(e_1)\circ g_2\circ  \lambda_{g_2^{-}}(e_2) \nonumber \\
&=& g_1\circ e_1\circ g_2\circ e_2 \nonumber \\
&=& g_1\circ e_1\circ g_2\circ e_1^{-}\circ e_1 \circ e_2 \nonumber \\
&=& (g_1\circ e_1\circ g_2\circ e_1^{-}) + \lambda_{(g_1\circ e_1\circ g_2\circ e_1^{-})^-}(e_1 \circ e_2) \nonumber \\
&=& (g_1\circ \alpha_{e_1}(g_2)) + (e_1 \circ e_2) \nonumber 
\end{eqnarray}
for all $g_1,g_2\in G$, $e_1,e_2\in E$, and this clearly implies that $f$ is an isomorphism from $(B,\circ)$ to the multiplicative group of $G\rtimes_{\alpha} E$, therefore the necessary condition follows.\\
Conversely, if $B\cong  A\rtimes T$ for some skew left brace $A$ and a trivial left semi-brace $T$, by a standard calculation one can show that $\{0\}\times T$ is the set of idempotents $E$ and $G$ is equal to $A\times \{0\}$. Moreover, we have that $\lambda_{(g_1,e_1)}(0,e_2)=(0,e_1\circ e_2)$ for all $g_1\in A,$ $e_1,e_2\in E$, therefore it follows that $Ker(\lambda_{|_E})=A\times \{0\}$.
\end{proof}

\noindent Now, we conclude the section by a further characterization of left cancellative left semi-braces having $G=Ker(\lambda_{|_E})$. Indeed, we show that all these left semi-braces come from a particular semidirect product of groups.

\begin{cor}
Let $B$ be a left cancellative left semi-braces such that $G=Ker(\lambda_{|_E})$. Then, $G$ is a skew left brace such that $(G,\circ)$ is a normal subgroup of $(B,\circ)$, the intersection $G\cap E$ is equal to $\{0\}$, the automorphism of $(G,\circ)$ given by $g\mapsto e\circ g\circ e^-$ is also an automorphism of $(G,+)$ for all $e\in E$ and $(B,\circ)$ is equal to the semidirect product of $(G,\circ)$ and $(E,\circ)$.\\
Conversely, every groups semidirect product $A=H\rtimes_{\beta} F$ together with a skew left brace structure on $H$ such that $\beta_f\in Aut(H,+,\circ)$ for all $f\in F$ give rise to a left cancellative left semi-brace on $A$ such that $H\times \{0\}$ is equal to $Ker(\lambda_{|_F})$ and $\{0\}\times F$ is the set of idempotents.
\end{cor}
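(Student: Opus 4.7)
The forward direction is essentially an assembly of results already proved in the section. Proposition \ref{addstrut} gives that $G$ is a skew left brace, and because $(B,+)=G+E$ is a direct sum of subsemigroups, we obtain $G\cap E=\{0\}$. Proposition \ref{lambdasemi} provides the group homomorphism $\lambda_{|_E}\colon (B,\circ)\to \operatorname{Aut}(E,+)$, whose kernel is by hypothesis $G$, so $(G,\circ)$ is a normal subgroup of $(B,\circ)$. Lemma \ref{lemker1} already supplies that conjugation by each $e\in E$ is an automorphism of $(G,+,\circ)$. Finally, combining normality of $(G,\circ)$ in $(B,\circ)$, the trivial intersection $G\cap E=\{0\}$, and the multiplicative factorization $B=G\circ E$ from Proposition \ref{fatt}, one obtains the internal semidirect decomposition $(B,\circ)=(G,\circ)\rtimes (E,\circ)$.

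For the converse, suppose $A=H\rtimes_{\beta}F$ is a semidirect product of groups, $H$ carries a skew left brace structure, and every $\beta_f$ lies in $\operatorname{Aut}(H,+,\circ)$. The plan is to realise $A$ as a semidirect product of left semi-braces in the sense of the Proposition recalled before Proposition \ref{semidi}. Let $T$ denote the trivial left semi-brace on the group $(F,\circ)$, and define $\alpha\colon T\to \operatorname{Aut}(H,+,\circ)$ by $\alpha_f:=\beta_f$. Since $\beta$ is a group homomorphism and $(T,\circ)=(F,\circ)$, this is a valid input to the construction and yields a left cancellative left semi-brace $H\rtimes_{\alpha}T$ whose multiplicative group is precisely $H\rtimes_{\beta}F=A$, so up to isomorphism it is a semi-brace structure on $A$. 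A short direct computation then shows that $\{0\}\times F$ consists exactly of the additive idempotents (cancellativity forces the $H$-component of an idempotent to be $0$), that $H\times\{0\}$ is the skew left brace $G$, and that $\lambda_{(h,0)}(0,f)=(0,f)$, whence $H\times\{0\}\subseteq \ker(\lambda_{|_{\{0\}\times F}})$; Lemma \ref{lem1} supplies the reverse inclusion.

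The main point requiring care is the converse: one has to observe that the hypothesis ``$\beta_f\in\operatorname{Aut}(H,+,\circ)$'' is exactly what the semidirect-product-of-semi-braces construction demands, since taking $F$ with the trivial semi-brace structure makes the requirement on the addition of $F$ automatic, leaving only the requirement that $\beta_f$ respects both operations of $H$. Beyond this observation, the remaining work is the routine bookkeeping identifying $E$, $G$, and $\ker(\lambda_{|_E})$ inside the constructed semi-brace, none of which is deep but all of which must be carried out explicitly to close the argument.
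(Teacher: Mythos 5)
Your proposal is correct and follows essentially the same route as the paper: the forward direction is the content of Theorem \ref{teostrutt} (which you unpack into its constituent lemmas), and the converse realises $A$ as the semidirect product of the skew left brace $H$ with the trivial left semi-brace on $F$, exactly as the paper does via the operation $(h_1,f_1)\oplus(h_2,f_2)=(h_1+h_2,f_2)$. You also carry out explicitly the identification of $E$, $G$ and $\ker(\lambda_{|_E})$ that the paper leaves as an ``easy calculation,'' and your use of Lemma \ref{lem1} for the reverse inclusion is a clean way to close that step.
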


\begin{proof}
The first part follows by \cref{teostrutt}. Conversely, suppose that $A$ is equal to $H\rtimes_{\beta} F$, where $H$, $F$ and $\beta$ are as in the hypothesis. Let $\oplus$ the binary operation on $H\times F$ given by
$$(h_1,f_1)\oplus (h_2,f_2):=(h_1+h_2,f_2) $$
for all $h_1,h_2\in H$, $f_1,f_2\in F$. Since $(A,\oplus,\circ)$ coincides with the semidirect product of the skew left brace $(H,+,\circ)$ and the trivial left semi-brace on $(F,\circ)$, it is a left cancellative left semi-brace. By an easy calculation, one can show that $H\times \{0\}$ is equal to $Ker(\lambda_{|_F})$ and $\{0\}\times F$ is the set of idempotents.
\end{proof}

\section{Left cancellative left semi-braces of size $pq$}

In this section we classify left cancellative left semi-braces of size $pq$, where $p,q$ are prime numbers such that $q\leq p$, and with a non-trivial set of additive idempotents. The following lemma is simple but of crucial importance for our scope.

\begin{lemma}\label{prelipq}
Let $B$ a left cancellative left semi-brace such that $|B|=pq$ and suppose that $|E|>1$. Then, $E$ is an ideal of $B$ or $G=Ker(\lambda_{|_E})$.
\end{lemma}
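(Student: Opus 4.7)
The plan is to combine the factorization $|B|=|G|\cdot|E|$ from \cref{fatt} with \cref{lem1}, which guarantees that $K:=\mathrm{Ker}(\lambda_{|_E})$ sits inside $G$. Since $|B|=pq$ and $|E|>1$, the possibilities for $|E|$ are $p$, $q$, or $pq$, so $|G|\in\{1,p,q\}$. The degenerate case $|E|=pq$ can be dispatched immediately: here $G=\{0\}$ and $B=E$, so $E$ is trivially normal in $(B,\circ)$, and by the remark following the definition of ideal this is equivalent to $E$ being an ideal. In the remaining cases $|G|$ is a prime, and since $K$ is a subgroup of $G$ it must be either $\{0\}$ or all of $G$; if $K=G$ we are done, so the work reduces to showing that when $K=\{0\}$ the subgroup $E$ is nevertheless normal in $(B,\circ)$ (and hence an ideal).

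The key device for that reduction is that $K=\{0\}$ turns $\lambda_{|_E}$ into an injective group homomorphism from $(B,\circ)$ into $\mathrm{Sym}(E)$, forcing $pq$ to divide $|E|!$. I would then run a short case analysis on $|E|$ and $|G|$, both of which lie in $\{p,q\}$. When $|E|=|G|$ one has $|B|=p^2$, so $(B,\circ)$ is abelian and $E$ is automatically normal. When $|E|=p>q=|G|$, the subgroup $E$ is a Sylow $p$-subgroup of a group of order $pq$ with $p>q$, and normality follows from a routine Sylow count: the number of Sylow $p$-subgroups divides $q$ and is congruent to $1$ modulo $p$, hence equals $1$. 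The delicate configuration is $|E|=q<p=|G|$: then $pq\mid q!$ would force the prime $p$ to divide $q!$ and hence $p\le q$, contradicting $q<p$, so this configuration simply cannot arise under $K=\{0\}$ and we are forced back into the case $K=G$ already handled.

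The main obstacle is really this last subcase: rather than proving normality of $E$ directly, we have to exclude the configuration via the injectivity of $\lambda_{|_E}$ into $\mathrm{Sym}(E)$. Everything else is book-keeping on the size constraints, the prime order of $G$ in the nondegenerate cases, and the elementary fact that groups of order $p^2$ are abelian.
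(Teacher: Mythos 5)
Your proof is correct and follows essentially the same route as the paper: a case split on $|E|$, with abelianness handling $p=q$, a Sylow count handling $|E|=p>q$, and the fact that $p\nmid q!$ handling $|E|=q<p$ via the homomorphism $\lambda_{|_E}$ into $\Sym(E)$ together with \cref{lem1}. The only cosmetic difference is that you phrase the last case as a dichotomy on the kernel ($K=\{0\}$ or $K=G$, since $|G|$ is prime) and rule out $K=\{0\}$ by embedding all of $(B,\circ)$ into $\Sym(E)$, whereas the paper bounds the image $\lambda_{|_E}(G)$ directly; the underlying divisibility argument is identical.
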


\begin{proof}
If $p=q$, then by $(B,\circ) $ is abelian and this implies that $E$ is a normal subgroup of $(B,\circ)$, hence an ideal. If $p>q$ and $E=p$ then it is again an ideal of $B$. Finally, if $p>q$ and $|E|=q$, by \cref{fatt} we have that $|G|=p$, therefore $|\lambda_{|_E}(G)|\in \{1,p\}$. Since $|\lambda_{|_E}(G)|$ must divides $q!$, necessarily $|\lambda_{|_E}(G)|=1$ and this fact, together with \cref{lem1}, implies $G=Ker(\lambda_{|_E})$
\end{proof}

\noindent Now, we have to consider some cases.

\subsection{Case $p\not\equiv_q 1$}

\smallskip

As we will show below, in this the classification is quite simple, provided a preliminary lemma.

\begin{lemma}\label{classlem}
Let $B$ be a left cancellative left semi-brace such that $|B|=pq$, where $p,q$ are prime numbers (not necessarily distinct) such that $p\not\equiv_q 1$. Moreover, suppose that $|E|>1$. Then $B$ is isomorphic to the direct product of the trivial left semi-braces $E$ and the trivial skew left brace $G$.
\end{lemma}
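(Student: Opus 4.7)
My plan is to split on the possible values of $(|G|,|E|)$, which by \cref{fatt} and the hypothesis $|E|>1$ must lie in $\{(1,pq),(p,q),(q,p)\}$, and to show in each case that $B$ decomposes as a direct product of $E$ (a trivial semi-brace) and $G$ (a trivial skew brace). The case $|G|=1$ is immediate: $B=E$ is a trivial left semi-brace, $G$ is a one-point skew brace (trivially trivial), and the direct product is $B$ itself.

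The preliminary observation I would use is that any skew left brace $H$ of prime order $r$ is trivial. Indeed, $\lambda:(H,\circ)\cong C_r\to\Aut(H,+)\hookrightarrow C_{r-1}$ must have trivial image by $\gcd(r,r-1)=1$, which unwinds to $+=\circ$ on $H$. This already dispatches the case $p=q$: then $(B,\circ)$ is a group of order $p^2$, hence abelian, so \cref{consesemi} yields $B\cong G\times E$ at once, and the observation forces $G$ (of order $p$) to be trivial.

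For $p>q$, \cref{prelipq} leaves two subcases. If $|E|=p$ and $|G|=q$, then $E$ is an ideal and \cref{semidi} gives $B\cong E\rtimes G$ via an action of $G$ on the trivial semi-brace $E$; since the left-zero addition $a+b=b$ is preserved by every bijection of $E$, one has $\Aut(E,+,\circ)=\Aut(E,\circ)\cong C_{p-1}$, so the action is a homomorphism $C_q\to C_{p-1}$, forced to be trivial by $q\nmid p-1$, and $G$ itself is trivial by the preliminary observation. If $|E|=q$ and $|G|=p$, then $G=Ker(\lambda_{|_E})$ and \cref{teostrutt} gives $B\cong G\rtimes E$ via an action of $E$ on the skew brace $G$; by the preliminary observation $G$ is the trivial skew brace on $C_p$, so $\Aut(G,+,\circ)=\Aut(C_p)\cong C_{p-1}$, and the action is again a homomorphism $C_q\to C_{p-1}$, trivial by $p\not\equiv_q 1$.

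The main obstacle is really just the bookkeeping: matching each branch of \cref{prelipq} with the correct structure theorem (\cref{semidi} versus \cref{teostrutt}), and correctly identifying $\Aut$ of the trivial left semi-brace and of the trivial skew brace on $C_p$. Once those identifications are in place, the numerical hypothesis $p\not\equiv_q 1$ does the rest by collapsing every relevant homomorphism $C_q\to C_{p-1}$ to the trivial one, so each semidirect product reduces to the desired direct product.
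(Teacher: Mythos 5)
Your proof is correct, but it takes a longer and genuinely different route than the paper's. The paper's entire argument is: the hypothesis $p\not\equiv_q 1$ forces $(B,\circ)$ to be abelian (for $p=q$ because every group of order $p^2$ is abelian, and for $q<p$ because a group of order $pq$ is non-abelian only when $q\mid p-1$), after which \cref{consesemi} immediately gives $B\cong G\times E$, with $G$ a trivial skew brace because $|G|\in\{1,p,q\}$. You exploit the abelianness of $(B,\circ)$ only in the $p=q$ case; for $p>q$ you instead route through \cref{prelipq} and the two structure theorems (\cref{semidi} when $|E|=p$, \cref{teostrutt} when $|E|=q$) and then kill the semidirect-product action by counting homomorphisms $C_q\to C_{p-1}$. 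That is valid: the identifications $\Aut(E,+,\circ)=\Aut(E,\circ)$ for a trivial left semi-brace and $\Aut(G,+,\circ)\cong C_{p-1}$ for the trivial skew brace of prime order are right, your observation that a skew left brace of prime order is trivial is correct, and $p\not\equiv_q 1$ is exactly $q\nmid p-1$. Note, however, that the triviality of those homomorphisms is the same arithmetic fact that makes $(B,\circ)$ abelian in the first place, so the case analysis buys nothing that \cref{consesemi} does not already deliver; what it does buy is an explicit verification of why each semidirect product collapses to a direct product, which the paper leaves to ``a standard calculation.'' The paper's one-line path is the cleaner way to the stated conclusion.
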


\begin{proof}
Since $p,q$ are prime numbers such that $p\not\equiv_q 1$, we have that $(B,\circ)$ is abelian. Hence by \cref{consesemi} $B$ is isomorphic to the direct product of the trivial left semi-brace on $(E,\circ)$ and the skew left brace on $(G,\circ)$, which is a trivial skew brace since $|G|\in \{1,p,q\}$. By a standard calculation on can show that the isomorphism class of this left semi-brace is completely determined by the size of $E$.
\end{proof}

\begin{theor}
Let $B$ be a left cancellative left semi-brace such that $|B|=pq$, where $p,q$ are prime numbers (not necessarily distinct) such that $p\not\equiv_q 1$. Moreover, suppose that $|E|>1$. Then, up to isomorphism, $B$ is one of these following left semi-braces:
\begin{itemize}
    \item[1)] $(B,\circ)=\mathbb{Z}/p^2\mathbb{Z}$, $E=B$ and $a+b:=b $ for all $a,b\in B$;
    \item[2)] $(B,\circ)=\mathbb{Z}/p\mathbb{Z}\times \mathbb{Z}/p\mathbb{Z}$, $E=B$ and $a+b:=b $ for all $a,b\in B$;
    \item[3)] $(B,\circ)=\mathbb{Z}/p\mathbb{Z}\times \mathbb{Z}/q\mathbb{Z}$, $E=B$ and $a+b:=b $ for all $a,b\in B$ (where $p\neq q$);
    \item[4)] $(B,\circ)=\mathbb{Z}/p\mathbb{Z}\times \mathbb{Z}/q\mathbb{Z}$, $E=\{0\}\times \mathbb{Z}/q\mathbb{Z}$ and $(a,b)+(c,d):=(a+c,d) $ for all $(a,b),(c,d)\in B$ (where $p\neq q$);
    \item[5)] $(B,\circ)=\mathbb{Z}/q\mathbb{Z}\times \mathbb{Z}/p\mathbb{Z}$, $E=\{0\}\times \mathbb{Z}/p\mathbb{Z}$ and $(a,b)+(c,d):=(a+c,d) $ for all $(a,b),(c,d)\in B$ (where $p\neq q$);
    \item[6)] $(B,\circ)=\mathbb{Z}/p\mathbb{Z}\times \mathbb{Z}/p\mathbb{Z}$,  $E=\{0\}\times \mathbb{Z}/p\mathbb{Z}$ and $(a,b)+(c,d):=(a+c,d) $ for all $(a,b),(c,d)\in B$;
\end{itemize}
\end{theor}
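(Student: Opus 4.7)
The plan is to leverage \cref{classlem}, which already gives the direct product decomposition of $B$ as a trivial left semi-brace on $E$ times a trivial skew left brace on $G$; the remaining work is a brief enumeration of the isomorphism types of $(B,\circ)$ together with the choice of the subgroup $E\subseteq (B,\circ)$.

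First, I would observe that by \cref{classlem}, $B$ is isomorphic to the direct product $E\times G$ of left semi-braces, where $E$ is a trivial left semi-brace and $G$ is a trivial skew left brace. Since $|G|\in\{1,p,q\}$ and every skew brace on a group of prime order is trivial (the $\lambda$-map must land in $\Aut(\mathbb{Z}/p\mathbb{Z})$, a group of order $p-1$, and so be trivial), the addition on $G$ coincides with $\circ$. Hence the whole semi-brace structure of $B$ is encoded by the abelian group $(B,\circ)$ together with the subgroup $E\le(B,\circ)$: concretely, for $b_i=g_i+e_i$ with $g_i\in G$ and $e_i\in E$, one has $b_1+b_2=(g_1\circ g_2)+e_2$.

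Next, I would enumerate the possibilities for $(B,\circ)$. The hypothesis $p\not\equiv_q 1$, together with the standard classification of groups of order $pq$, forces $(B,\circ)$ to be abelian. Thus: if $p\neq q$, then $(B,\circ)\cong\mathbb{Z}/p\mathbb{Z}\times\mathbb{Z}/q\mathbb{Z}$; if $p=q$, then $(B,\circ)$ is either $\mathbb{Z}/p^2\mathbb{Z}$ or $\mathbb{Z}/p\mathbb{Z}\times\mathbb{Z}/p\mathbb{Z}$. For each such $(B,\circ)$ I would list the subgroups $E$ with $|E|>1$, invoking \cref{consesemi2} to handle the cyclic prime-power case (where $|E|>1$ forces $E=B$, giving case $1$). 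For $(\mathbb{Z}/p\mathbb{Z})^2$ the options are $E=B$ (case $2$) and $|E|=p$ (case $6$); for $\mathbb{Z}/p\mathbb{Z}\times\mathbb{Z}/q\mathbb{Z}$ with $p\neq q$, the options are $E=B$ (case $3$), $|E|=q$ (case $4$), and $|E|=p$ (case $5$).

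The only real subtlety is the uniqueness of the isomorphism class for each choice of $(|E|,(B,\circ))$. When $|E|$ equals the order of a unique Sylow subgroup (cases $4$ and $5$) there is nothing to check. When $|E|=p$ inside $(\mathbb{Z}/p\mathbb{Z})^2$ (case $6$), I would note that $\Aut(B,\circ)\cong\mathrm{GL}_2(\mathbb{F}_p)$ acts transitively on the one-dimensional subspaces, so any two choices of $E$ yield isomorphic semi-braces through an automorphism of $(B,\circ)$ (which automatically gives a semi-brace isomorphism since the brace structures on $G$ and $E$ are trivial). Finally the six cases are pairwise non-isomorphic, since they are distinguished by $|E|$ and the isomorphism type of $(B,\circ)$.
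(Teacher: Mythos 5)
Your proposal is correct and follows essentially the same route as the paper: the paper's proof simply invokes \cref{classlem} plus ``a standard calculation,'' and your write-up is exactly that calculation carried out (enumerating the abelian groups of order $pq$, the admissible subgroups $E$, and the uniqueness up to automorphisms of $(B,\circ)$). The only point worth stating slightly more carefully in case 6) is that one needs transitivity of $\mathrm{GL}_2(\mathbb{F}_p)$ on ordered pairs of distinct lines (matching both $E$ and its complement $G$), which of course holds.
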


\begin{proof}
The thesis follows applying \cref{classlem} and a standard calculation.
\end{proof}


\subsection{Case $p\equiv_q 1$}

\smallskip

In this subsection we focus on the case $p\equiv_q 1$. At first, we need a series of preliminary lemmas.

\begin{lemma}\label{semipq1}
Let $B$ be a left cancellative left semi-brace such that $|B|=pq$, where $p,q$ are prime numbers such that $p\equiv_q 1$. Moreover, suppose that $|E|=q$. Then $G=Ker(\lambda_{|_E}) $ and $B$ is isomorphic to the semidirect product of the trivial left semi-braces $E$ and the trivial skew left brace $G$ by the inner action of $(E,\circ)$ on $(G,\circ)$.
\end{lemma}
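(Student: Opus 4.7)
The plan is to combine a short order argument with the structure theorem already proved. Since $q$ is prime, the hypothesis $p\equiv_q 1$ forces $p\neq q$ (else $q\mid 1$) and hence $p>q$. By \cref{fatt} we have $|G|=p$. To establish $G=Ker(\lambda_{|_E})$, I would invoke \cref{lambdasemi}, which yields a group homomorphism from $(B,\circ)$ to $Aut(E,+)$; since $E$ is a trivial left semi-brace of order $q$, this target is isomorphic to $\Sym_q$, of order $q!$. Thus $|\lambda_{|_E}(G)|$ divides $\gcd(|G|,q!)=\gcd(p,q!)=1$, giving $G\subseteq Ker(\lambda_{|_E})$; the reverse inclusion is \cref{lem1}.

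With this equality in hand, I would apply \cref{teostrutt} to obtain an isomorphism $B\cong G\rtimes_{\alpha} E$, where $\alpha_e(g)=e\circ g\circ e^{-}$ by the construction in \cref{lemker1}. Since $|G|=p$ is prime, the only skew left brace structure on a group of order $p$ is the trivial one, so $G$ is the trivial skew left brace on $\mathbb{Z}/p\mathbb{Z}$. Moreover, because $G$ is normal in $(B,\circ)$ and meets $E$ only at $0$, $(B,\circ)$ decomposes as the group semidirect product $G\rtimes E$, and the map $\alpha_e(g)=e\circ g\circ e^{-}$ is precisely conjugation, i.e.\ the inner action of $(E,\circ)$ on $(G,\circ)$ inherited from $(B,\circ)$.

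I do not anticipate a genuine obstacle: the content is essentially packaged by \cref{teostrutt} together with \cref{lem1}. The only small point worth addressing is guaranteeing that we land in the second alternative of \cref{prelipq}, namely $G=Ker(\lambda_{|_E})$, rather than merely in the case "$E$ is an ideal"; this is exactly what the coprimality $\gcd(p,q!)=1$ secures, and it is why the hypothesis $p\equiv_q 1$ (which makes non-triviality of the resulting inner action even possible, since it allows $q\mid p-1=|Aut(G,\circ)|$) is consistent with the conclusion.
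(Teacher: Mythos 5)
Your proposal is correct and follows essentially the same route as the paper: the paper's proof simply cites \cref{prelipq} (whose argument for the case $|E|=q<p$ is exactly your divisibility computation $|\lambda_{|_E}(G)|\mid\gcd(p,q!)=1$ combined with \cref{lem1}) together with \cref{teostrutt}. Your additional remarks that $G$ must be the trivial skew left brace on a group of prime order and that $\alpha_e$ is conjugation are the same observations implicit in the paper's statement.
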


\begin{proof}
It follows by \cref{teostrutt} and \cref{prelipq}.
\end{proof}

\begin{lemma}\label{semipq2}
Let $B$ be a left cancellative left semi-brace such that $|B|=pq$, where $p,q$ are prime numbers such that $p\equiv_q 1$. Moreover, suppose that $|E|=p$. Then $B$ is isomorphic to a semidirect product of the trivial skew left brace $G$ and the trivial left semi-brace $G$.
\end{lemma}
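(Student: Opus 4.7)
The plan is a straightforward application of \cref{prelipq} together with \cref{semidi}. First, \cref{fatt} gives $|G|=q$, and since $p\equiv_q 1$ forces $q<p$, the Sylow theorems applied to the group $(B,\circ)$ of order $pq$ yield a unique Sylow $p$-subgroup. Because $E$ is a subgroup of $(B,\circ)$ of order $p$, it coincides with this Sylow $p$-subgroup and is therefore normal in $(B,\circ)$. As remarked after the definition of ideal, a set of idempotents which is normal in $(B,\circ)$ is automatically an ideal of $B$, so we are in the \emph{$E$ is an ideal} branch of \cref{prelipq} (regardless of whether the other branch also holds).

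With $E$ an ideal in hand, I would invoke \cref{semidi} to write $B\cong T\rtimes A$, with $T$ a trivial left semi-brace of order $|E|=p$ and $A$ a skew left brace of order $|G|=q$. The last step is to observe that a skew left brace of prime order $q$ is necessarily trivial: its additive and multiplicative groups are both cyclic of order $q$, and since $\gcd(q,q-1)=1$ the associated $\lambda$-map from $(A,\circ)$ to $\Aut(A,+)$ must be trivial, which forces $a\circ b=a+b$. Consequently $A$ is the trivial skew left brace on $(G,\circ)$ and $T$ is the trivial left semi-brace on $(E,\circ)$, giving the stated semidirect decomposition (reading the final \textquotedblleft $G$\textquotedblright{} in the statement as $E$, which appears to be a typo).

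There is essentially no hidden obstacle; the result is a clean corollary of the structural machinery of Section~2. The only nontrivial ingredient is the Sylow argument placing us in the correct branch of \cref{prelipq}, and this uses only the numerical hypothesis $q<p$ guaranteed by $p\equiv_q 1$. I would expect any subsequent work to enumerate the resulting isomorphism classes by classifying the homomorphisms $(A,\circ)\to\Aut(T,+,\circ)$ (and the group $\Aut(T,+,\circ)$ acting on a trivial semi-brace on a cyclic group of order $p$ is $\Aut(\mathbb{Z}/p\mathbb{Z})$), but this parametrisation is not required by the statement as given.
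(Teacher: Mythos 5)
Your argument is correct and follows the same route as the paper: the paper's proof simply cites \cref{semidi} and \cref{prelipq}, and your Sylow argument showing $E$ is normal (hence an ideal) is exactly the content of the $|E|=p>q$ branch of \cref{prelipq}. You additionally spell out why a skew left brace of prime order is trivial and correctly identify the typo in the statement; both are fine and consistent with what the paper leaves implicit.
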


\begin{proof}
The thesis follows by \cref{semidi} and \cref{prelipq}.
\end{proof}

Before giving the following lemma, recall that if $(B_1,\circ),(B_2,\circ')$ are groups and $\alpha:(B_2,\circ')\longrightarrow Aut(B_1,\circ)$ a homomorphism from $B_2$ to the automorphisms groups of $(B_1,\circ)$, then we provide both a semidirect product of groups $B_1\rtimes B_2$ and a semidirect product of left semi-braces $B_1\rtimes B_2$ where $B_1$ can be a trivial left semi-brace or a trivial skew left brace.

\begin{lemma}\label{semipq3}
Let $p,q$ be prime numbers such that $p\equiv_q 1$, $E$ a trivial left semi-brace and $G$ a trivial skew left brace. Then:
\begin{itemize}
    \item[1)] If $|E|=q$, $|G|=p$ and $\alpha,\alpha'$ are homomorphisms from $(E,\circ)$ to $Aut(G,\circ)$ such that the semidirect products of groups $G\rtimes_{\alpha} E$ and $G\rtimes_{\alpha'} E$ are isomorphic, then the semidirect products of left semi-braces $G\rtimes_{\alpha} E $ and $G\rtimes_{\alpha'} E$ are isomorphic.
    \item[2)] If $|E|=p$, $|G|=q$ and $\alpha,\alpha'$ are homomorphisms from $(G,\circ)$ to $Aut(E,\circ)$ such that the semidirect products of groups $E\rtimes_{\alpha} G$ and $E\rtimes_{\alpha'} G$ are isomorphic, then the semidirect products of left semi-braces $E\rtimes_{\alpha} G $ and $E\rtimes_{\alpha'} G$ are isomorphic.
\end{itemize}
\end{lemma}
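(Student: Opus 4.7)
The plan is to promote the given isomorphism of the underlying multiplicative groups to an isomorphism of left semi-braces by first normalising it via an inner automorphism, and then noting that on a semidirect product of two \emph{trivial} pieces the additive structure is essentially forced by the group data.

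First I would exploit the arithmetic $p\equiv_q 1$ (so $p>q$) and Sylow's theorems. In both cases the underlying group has order $pq$ with a unique, hence normal, Sylow $p$-subgroup: in (1) this is $G\times\{0\}$ and in (2) this is $E\times\{0\}$. Thus any group isomorphism $\varphi$ between the two semidirect products sends the normal Sylow $p$-subgroup of the source onto the (distinguished) normal Sylow $p$-subgroup of the target. The complementary factor ($\{0\}\times E$ or $\{0\}\times G$), however, is only \emph{some} Sylow $q$-subgroup, and $\varphi$ need not preserve the chosen one. Since all Sylow $q$-subgroups are conjugate, there is an element $h$ in the codomain such that the inner automorphism $c_h$ sends $\varphi(\{0\}\times E)$ (resp.\ $\varphi(\{0\}\times G)$) onto the chosen complement $\{0\}\times E$ (resp.\ $\{0\}\times G$). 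Setting $\psi:=c_h\circ \varphi$ gives a group isomorphism that preserves \emph{both} visible factors setwise, because any inner automorphism fixes a normal subgroup setwise.

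Next I would use the decomposition $(x,y)=(x,0)\circ(0,y)$ in the semidirect product to conclude that $\psi(x,y)=(\psi_1(x),\psi_2(y))$ for group isomorphisms $\psi_1$ of the skew-brace factor and $\psi_2$ of the trivial-semi-brace factor. The final step is then a direct check that $\psi$ preserves $+$. Using $(a_1,b_1)+(a_2,b_2)=(a_1+a_2,\,b_1+b_2)$ together with the fact that on a trivial skew left brace $x+y=x\circ y$ and on a trivial left semi-brace $x+y=y$, the identity
\[
\psi\bigl((x_1,y_1)+(x_2,y_2)\bigr) \;=\; \psi(x_1,y_1)+\psi(x_2,y_2)
\]
reduces to the single observation that $\psi_1$ is a group isomorphism between (multiplicatively) isomorphic cyclic factors of prime order.

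The main obstacle is the very first move: a given group isomorphism has no reason to respect the chosen complement, and without this the formula $\psi(x,y)=(\psi_1(x),\psi_2(y))$ fails and there is no handle on $+$ at all. The Sylow / inner-automorphism reduction is the essential trick, and it is exactly the arithmetic condition $p\equiv_q 1$ with $p>q$ that makes the Sylow $p$-subgroup unique and the Sylow $q$-subgroups all conjugate. Once this normalisation is in place, everything else is a one-line calculation because both factors of the semidirect product carry the most degenerate possible additive structure.
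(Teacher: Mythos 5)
Your proposal is correct, and it follows the same basic route as the paper: promote a group isomorphism of the semidirect products to a semi-brace isomorphism. The paper's own proof is a one-line appeal to ``a standard calculation'' showing that \emph{an} isomorphism of the groups is also an isomorphism of the left semi-braces; read literally, that claim is false, and your normalisation step is exactly what repairs it. Indeed, writing a group isomorphism $\varphi$ in the form $\varphi(g,e)=(\sigma(g)\circ h(e),\tau(e))$ (which one gets from normality of the Sylow $p$-factor), the additivity identity forces $h\equiv 0$, i.e.\ $\varphi$ must carry the chosen complement $\{0\}\times E$ (resp.\ $\{0\}\times G$) onto the chosen complement of the target; a general isomorphism need not do this when the group is non-abelian, since the Sylow $q$-subgroup is then not normal. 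Your composition with an inner automorphism $c_h$, using conjugacy of Sylow $q$-subgroups and the fact that $c_h$ fixes the normal Sylow $p$-subgroup setwise, produces a $\psi$ of the split form $\psi(x,y)=(\psi_1(x),\psi_2(y))$, after which the check that $\psi$ preserves $+$ is immediate because the additive structures ($x+y=x\circ y$ on the trivial skew brace factor, $x+y=y$ on the trivial semi-brace factor) do not involve $\alpha$ or $\alpha'$ at all. One cosmetic remark: in case 2) the final verification reduces to multiplicativity of the automorphism of the $G$-factor (the second coordinate), not of the $E$-factor, but this does not affect the argument.
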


\begin{proof}
For 1), by a standard calculation one can show that an isomorphism between the semidirect products of groups $G\rtimes_{\alpha} E$ and $G\rtimes_{\alpha'} E$ is also an isomorphism of the semidirect products of left semi-braces $G\rtimes_{\alpha} E $ and $G\rtimes_{\alpha'} E$. The second assumption can be shown in the same way.
\end{proof}

\noindent Now, we can give the theorem which closes the section.

\begin{theor}
Let $B$ be a left cancellative left semi-brace such that $|B|=pq$, where $p,q$ are prime numbers such that $p\equiv_q 1$. Moreover, suppose that $|E|>1$ and let $\alpha$ be an injective homomorphism from $\mathbb{Z}/q\mathbb{Z}$ to $Aut(\mathbb{Z}/p\mathbb{Z},+)$. Then, up to isomorphism, $B$ is one of these following left semi-braces:
\begin{itemize}
    \item[1)] $(B,\circ)=\mathbb{Z}/p\mathbb{Z}\rtimes_{\alpha} \mathbb{Z}/q\mathbb{Z}$, $E=B$ and $a+b:=b $ for all $a,b\in B$;
    \item[2)] $(B,\circ)=\mathbb{Z}/p\mathbb{Z}\times \mathbb{Z}/q\mathbb{Z}$, $E=B$ and $a+b:=b $ for all $a,b\in B$;
    \item[3)] $(B,\circ)=G\rtimes_{\alpha} E$, $(E,\circ)=\mathbb{Z}/q\mathbb{Z}$, $(G,\circ)=\mathbb{Z}/p\mathbb{Z}$, $(g_1,e_1)+(g_2,e_2)=(g_1+g_2,e_2)$ and $(g_1,e_1)\circ (g_2,e_2)=(g_1+ \alpha_{e_1}(g_2),e_1+ e_2)$ for all $(g_1,e_1),(g_2,e_2)\in B$;
    \item[4)] $(B,\circ)=E\rtimes_{\alpha} G$, $(E,\circ)=\mathbb{Z}/p\mathbb{Z}$, $(G,\circ)=\mathbb{Z}/q\mathbb{Z}$, $(e_1,g_1)+(e_2,g_2)=(e_2,g_1+g_2)$ and $(e_1,g_1)\circ (e_2,g_2)=(e_1+ \alpha_{g_1}(e_2),g_1+ g_2)$ for all $(g_1,e_1),(g_2,e_2)\in B$;
    \item[5)] $(B,\circ)=G\times E$, $(E,\circ)=\mathbb{Z}/q\mathbb{Z}$, $(G,\circ)=\mathbb{Z}/p\mathbb{Z}$, $(g_1,e_1)+(g_2,e_2)=(g_1+g_2,e_2)$ and $(g_1,e_1)\circ (g_2,e_2)=(g_1+ g_2,e_1+ e_2)$ for all $(g_1,e_1),(g_2,e_2)\in B$;
    \item[6)] $(B,\circ)=G\times E$, $(E,\circ)=\mathbb{Z}/p\mathbb{Z}$, $(G,\circ)=\mathbb{Z}/q\mathbb{Z}$, $(g_1,e_1)+(g_2,e_2)=(g_1+g_2,e_2)$ and $(g_1,e_1)\circ (g_2,e_2)=(g_1+ g_2,e_1+ e_2)$ for all $(g_1,e_1),(g_2,e_2)\in B$.
\end{itemize}
\end{theor}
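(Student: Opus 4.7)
The plan is a case analysis on $|E|$. By \cref{fatt} we have $|E|\cdot|G|=pq$, and combined with $|E|>1$ this forces $(|G|,|E|)\in\{(1,pq),(p,q),(q,p)\}$. Pairwise non-isomorphism of the resulting six families will be immediate at the end, since they split by $|E|$ into three blocks $\{1),2)\}$, $\{3),5)\}$, $\{4),6)\}$ and within each block the two members are told apart by whether $(B,\circ)$ is abelian; so I would leave this check to the last line.

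For $|E|=pq$: here $B=E$ is a trivial left semi-brace, so its whole structure is determined by the group $(B,\circ)$. Since $p\equiv_q 1$, there are exactly two isomorphism classes of groups of order $pq$, namely the non-abelian $\mathbb{Z}/p\mathbb{Z}\rtimes_{\alpha}\mathbb{Z}/q\mathbb{Z}$ and the cyclic $\mathbb{Z}/p\mathbb{Z}\times\mathbb{Z}/q\mathbb{Z}$; these yield items 1) and 2).

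For $|E|=q$ (hence $|G|=p$) I would invoke \cref{semipq1}: it furnishes $G=\mathrm{Ker}(\lambda_{|_E})$ and an isomorphism $B\cong G\rtimes E$ of left semi-braces, with $G$ a trivial skew left brace of order $p$, $E$ a trivial left semi-brace of order $q$, and the action the inner one of $(E,\circ)$ on the normal subgroup $(G,\circ)$ of $(B,\circ)$. By \cref{semipq3}(1), the isomorphism class of this left semi-brace depends only on that of the underlying groups semidirect product $\mathbb{Z}/p\mathbb{Z}\rtimes\mathbb{Z}/q\mathbb{Z}$. Up to group isomorphism there are exactly two such: the direct product (yielding case 5)) and the non-split extension, which is unique because $\mathrm{Aut}(\mathbb{Z}/p\mathbb{Z})$ admits a single subgroup of order $q$ (yielding case 3)). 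The case $|E|=p$, $|G|=q$ is handled symmetrically via \cref{semipq2} and \cref{semipq3}(2): $B$ is a semidirect product $E\rtimes G$ of the trivial left semi-brace $E$ by the trivial skew left brace $G$, and the classification again reduces to the underlying groups semidirect product $\mathbb{Z}/p\mathbb{Z}\rtimes\mathbb{Z}/q\mathbb{Z}$, producing case 6) for the trivial action and case 4) for the unique non-trivial one.

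The main obstacle is less computational than conceptual: one must be certain that among the $q-1$ non-trivial injective homomorphisms $\mathbb{Z}/q\mathbb{Z}\to\mathrm{Aut}(\mathbb{Z}/p\mathbb{Z})$ any two yield isomorphic left semi-brace semidirect products, so that a single representative $\alpha$ (as in the statement) really suffices. This is exactly what \cref{semipq3} was established for, and once it is invoked the rest of the proof is a routine assembly of the three cases above.
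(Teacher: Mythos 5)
Your proposal is correct and follows essentially the same route as the paper: a case split on $|E|\in\{pq,q,p\}$, reduction to a semidirect product of left semi-braces via \cref{semipq1} and \cref{semipq2}, and then \cref{semipq3} to reduce the isomorphism question to the group-level classification of $\mathbb{Z}/p\mathbb{Z}\rtimes\mathbb{Z}/q\mathbb{Z}$, with the abelian/non-abelian dichotomy separating the two members of each block. (Only a wording quibble: the non-abelian group of order $pq$ is a non-trivial \emph{split} extension, not a non-split one.)
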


\begin{proof}
If $|E|=pq$, then $B$ is a trivial left semi-brace, hence we find the left semi-braces 1), if $(B,\circ)$ is not abelian, and 2), if $(B,\circ)$ is abelian. If $1<|E|<pq$, by \cref{semipq1} and \cref{semipq2} $B$ is a semidirect product of two left semi-braces. If $|E|=p$, by \cref{semipq3} we find the left semi-brace 4), if $(B,\circ)$ is not abelian, and the left semi-brace 6), if $(B,\circ)$ is abelian. If $|E|=q$, again by \cref{semipq3}, we find the left semi-brace 3), if $(B,\circ)$ is not abelian, and the left semi-brace 5), if $(B,\circ)$ is abelian. 
\end{proof}

\section{Left cancellative left semi-brace of size $2p^2$}

In this section, we classify left cancellative left semi-braces of size $2p^2$, where $p$ is a prime number with $p>2$, and such that $E$ is a Sylow subgroup of $(B,\circ)$. As for the left semi-braces of size $pq$, we have to consider some cases.



\subsection{Case $|E|=2$ and $(G,\circ)$ cyclic}

\begin{lemma}\label{e11}
Let $B$ be a left cancellative left semi-brace of size $2p^2$ and suppose that $|E|=2$. Then, $B$ is isomorphic to the semidirect product of the trivial left semi-braces $E$ and a skew left brace $G$ of size $p^2$. 
\end{lemma}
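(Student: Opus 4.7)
The plan is to apply \cref{teostrutt}, which will reduce the statement to showing $G=Ker(\lambda_{|_E})$. By \cref{fatt} we already know $|G|=|B|/|E|=p^2$, so the skew brace factor will automatically have the asserted size $p^2$.

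To verify $G=Ker(\lambda_{|_E})$, I would run a counting argument on the homomorphism $\lambda_{|_E}\colon (B,\circ)\longrightarrow \Aut(E,+)$. By the observation after \cref{lambdasemi}, $\Aut(E,+)\cong \Sym(|E|)$, which for $|E|=2$ has order $2$. Hence the image of $\lambda_{|_E}$ has order at most $2$, so $[B:Ker(\lambda_{|_E})]\leq 2$, giving the lower bound $|Ker(\lambda_{|_E})|\geq p^2$. On the other hand, \cref{lem1} provides the inclusion $Ker(\lambda_{|_E})\subseteq G$, giving the matching upper bound $|Ker(\lambda_{|_E})|\leq |G|=p^2$. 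Combining the two bounds forces $Ker(\lambda_{|_E})=G$.

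Applying \cref{teostrutt} then produces an isomorphism of $B$ with the semidirect product of the skew left brace $G$ of size $p^2$ and the trivial left semi-brace $E$, which is exactly what is claimed. I do not expect any genuine obstacle: the role of the hypothesis $|E|=2$ is precisely to force the codomain of $\lambda_{|_E}$ to be small enough that the index bound coming from $|\Aut(E,+)|=2$ meets the size bound $p^2$ coming from \cref{lem1}, leaving no slack and collapsing the kernel onto $G$.
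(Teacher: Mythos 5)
Your proposal is correct and follows essentially the same route as the paper: both reduce the claim to showing $G=Ker(\lambda_{|_E})$ (using \cref{fatt} to get $|G|=p^2$) and then invoke \cref{teostrutt}. The only cosmetic difference is in how that kernel equality is verified --- you count the index via $|\Aut(E,+)|=2$ together with the inclusion from \cref{lem1}, whereas the paper observes directly that each $\lambda_g$ with $g\in G$ fixes $0$ and hence, $E$ having only two elements, fixes $E$ pointwise.
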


\begin{proof}
If $|E|=2$, by \cref{fatt}, we have that $|G|=p^2$, therefore $G$ is a normal subgroup of $(B,\circ)$. Moreover, since $\lambda_g(0)=0$ for all $g\in G$, necessarily $\lambda_g(e)=e$ for all $g\in G$, $e\in E$, hence $G=Ker(\lambda_{|_E})$. Therefore the thesis follows by \cref{teostrutt}.
\end{proof}

\begin{lemma}\label{e12}
There are two skew left braces of size $p^2$ and with cyclic multiplicative group. The skew left brace $G_1:=(\mathbb{Z}/p^2\mathbb{Z},+,\circ)$ given by $g_1\circ g_2:=g_1+g_2$ for all $g_1,g_2\in G_1$ and the skew left brace $G_2:=(\mathbb{Z}/p^2\mathbb{Z},+,\circ)$ given by $g_1\circ g_2:=g_1+g_2+pg_1 g_2$ for all $g_1,g_2\in G_1$. Moreover, $(G_1,+,\circ)$ has exactly one automorphism having order $2$, while $(G_2,+,\circ)$ has not such an automorphism.
\end{lemma}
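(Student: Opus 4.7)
The plan proceeds in three stages: first prove that any skew left brace of size $p^2$ with cyclic multiplicative group must have cyclic additive group as well; then parameterize all such structures on $\mathbb{Z}/p^2\mathbb{Z}$ and show they fall into exactly two isomorphism classes; finally compute $\Aut(G_1)$ and $\Aut(G_2)$ and count their elements of order $2$. Throughout I use the standard skew-brace identity $a\circ b = a + \lambda_a(b)$, where $\lambda\colon(B,\circ)\to\Aut(B,+)$ is a group homomorphism.

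\textbf{Step 1: $(B,+)$ is cyclic.} Suppose for contradiction that $(B,+)\cong\mathbb{Z}/p\mathbb{Z}\times\mathbb{Z}/p\mathbb{Z}$; then $|\Aut(B,+)|=p(p-1)^2(p+1)$, so $|\mathrm{Im}(\lambda)|$ divides $\gcd(p^2,p(p-1)^2(p+1))=p$. A trivial $\lambda$ would force $\circ=+$, which is incompatible with $(B,\circ)$ being cyclic of order $p^2$. Otherwise, picking a generator $c$ of $(B,\circ)$, the map $T:=\lambda_c$ is a unipotent element of order $p$, hence $T=I+N$ with $N^2=0$. An easy induction gives $c^{\circ n}=(I+T+\cdots+T^{n-1})c=(nI+\binom{n}{2}N)c$, and at $n=p$ both $pI$ and $\binom{p}{2}N$ vanish in the elementary abelian group (the latter because $p$ odd forces $\binom{p}{2}\equiv 0\pmod p$); thus $c^{\circ p}=0$, contradicting the $\circ$-order of $c$.

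\textbf{Step 2: Parameterization and isomorphism classes.} With $(B,+)=\mathbb{Z}/p^2\mathbb{Z}$, the image of $\lambda$ lies in the unique order-$p$ subgroup $\{1+pk:k\in\mathbb{Z}/p\mathbb{Z}\}$ of $(\mathbb{Z}/p^2\mathbb{Z})^*$. Writing $\lambda_1=1+pj$ and using $(pj)^2=0$ together with the homomorphism property of $\lambda$, a short calculation yields $\lambda_a=1+pja$ for every $a\in B$, hence $a\circ b=a+b+pj\,ab$. Direct verification shows this defines a skew brace, and the $\circ$-order of the element $1$ equals $p^2$ for every $j$, so $(B,\circ)$ is cyclic of order $p^2$. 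The case $j=0$ is $G_1$; for any $j\neq 0$, the additive automorphism $\phi(x)=cx$ translates the parameter via $j'c\equiv j\pmod p$, so all nonzero $j$ produce isomorphic skew braces, represented by $G_2$ (taking $j=1$). Moreover $G_1\not\cong G_2$ since the isomorphism condition $0\equiv c\cdot 1\pmod p$ has no solution in $(\mathbb{Z}/p^2\mathbb{Z})^*$.

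\textbf{Step 3: Order-$2$ automorphisms.} Every skew brace automorphism is additive, hence of the form $\phi(x)=cx$ with $c\in(\mathbb{Z}/p^2\mathbb{Z})^*$. For $G_1$ (where $\circ=+$) no further constraint arises, so $\Aut(G_1)\cong(\mathbb{Z}/p^2\mathbb{Z})^*$ is cyclic of order $p(p-1)$; since $p-1$ is even, this cyclic group has a unique element of order $2$, namely $-1$. For $G_2$, the identity $\phi(a\circ b)=\phi(a)\circ\phi(b)$ reads $c(a+b+p\,ab)=ca+cb+p\,(ca)(cb)$, which simplifies to $c\equiv c^2\pmod p$, i.e.~$c\equiv 1\pmod p$. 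Thus $|\Aut(G_2)|=p$, which is odd, so $\Aut(G_2)$ contains no element of order $2$. The decisive obstacle is Step 1: ruling out the elementary abelian additive structure when $(B,\circ)$ is cyclic of order $p^2$. The $\gcd$ bound on $|\mathrm{Im}(\lambda)|$ is immediate, but translating it into a contradiction requires both the closed-form expression for $c^{\circ p}$ and the oddness of $p$ to annihilate the $\binom{p}{2}N$ term; once cyclicity of $(B,+)$ is secured, Steps 2 and 3 reduce to explicit arithmetic in the ring $\mathbb{Z}/p^2\mathbb{Z}$.
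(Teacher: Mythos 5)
Your proof is correct, but it takes a genuinely different route from the paper: the paper disposes of this lemma in one line by citing Bachiller's classification of braces of order $p^2$ and a lemma of Dietzel for the automorphism counts, whereas you prove everything from first principles. Your Step 1 (ruling out elementary abelian additive group via the $\gcd$ bound on $|\mathrm{Im}(\lambda)|$ and the computation $c^{\circ p}=(pI+\binom{p}{2}N)c=0$), your parameterization $a\circ b=a+b+pjab$ with the isomorphism condition $j\equiv j'c\pmod p$, and your automorphism computations ($\Aut(G_1)\cong(\mathbb{Z}/p^2\mathbb{Z})^*$ cyclic of even order, hence a unique involution; $\Aut(G_2)$ of odd order $p$, hence none) are all sound. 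What your approach buys is self-containedness and transparency about where each hypothesis enters — in particular you make explicit that $p$ odd is needed both to kill the $\binom{p}{2}N$ term in Step 1 and to get $1^{\circ p}=p\neq 0$ in Step 2; indeed the statement fails for $p=2$ (there is a skew brace on $(\mathbb{Z}/2\mathbb{Z})^2$ with cyclic multiplicative group), so this is not a cosmetic point, and the paper's surrounding section does assume $p>2$. The only places you compress are the claim $\lambda_a=1+pja$ for all $a$ (which implicitly uses that $1$ generates $(B,\circ)$ — true, since the non-generators form the unique subgroup $p\mathbb{Z}/p^2\mathbb{Z}$ and $1$ is not in it) and the verification that $a\circ b=a+b+pjab$ is associative and left-distributive in the brace sense; both are routine. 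The citation route is of course shorter, but your argument would serve a reader who does not have those references at hand.
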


\begin{proof}
It follows by \cite[Proposition 2.4]{bachiller2015classification} and \cite[Lemma 3]{dietzel2021braces}.
\end{proof}

\begin{theor}
Let $B$ be a left cancellative left semi-brace of size $2p^2$. Suppose that $|E|=2$, $(G,\circ)$ is cyclic and let $\alpha$ be the injective homomorphism from $(\mathbb{Z}/2\mathbb{Z},+)$ to $Aut(\mathbb{Z}/p^2\mathbb{Z},+)$. Then, up to isomorphisms, $B$ is one of the following left semi-braces:
\begin{itemize}
     \item[1)] $(B,\circ)=\mathbb{Z}/p^2\mathbb{Z}\rtimes_{\alpha} \mathbb{Z}/2\mathbb{Z}$, $E=\{0\}\times \mathbb{Z}/2\mathbb{Z}$, $G=G_1=\mathbb{Z}/p^2\mathbb{Z}\times \{0\}$,  $(g_1,e_1)+(g_2,e_2)=(g_1+g_2,e_2)$ and $(g_1,e_1)\circ (g_2,e_2)=(g_1+ \alpha_{e_1}(g_2),e_1+ e_2)$ for all $(g_1,e_1),(g_2,e_2)\in B$;
     \item[2)] $(B,\circ)=\mathbb{Z}/p^2\mathbb{Z}\times \mathbb{Z}/2\mathbb{Z}$, $E=\{0\}\times \mathbb{Z}/2\mathbb{Z}$, $G=G_1=\mathbb{Z}/p^2\mathbb{Z}\times \{0\}$,  $(g_1,e_1)+(g_2,e_2)=(g_1+g_2,e_2)$ and $(g_1,e_1)\circ (g_2,e_2)=(g_1+ g_2,e_1+ e_2)$ for all $(g_1,e_1),(g_2,e_2)\in B$;
     \item[3)] $(B,\circ)=\mathbb{Z}/p^2\mathbb{Z}\times \mathbb{Z}/2\mathbb{Z}$, $E=\{0\}\times \mathbb{Z}/2\mathbb{Z}$, $G=G_2=\mathbb{Z}/p^2\mathbb{Z}\times \{0\}$,  $(g_1,e_1)+(g_2,e_2)=(g_1+g_2,e_2)$ and $(g_1,e_1)\circ (g_2,e_2)=(g_1+ g_2+p g_1 g_2,e_1+ e_2)$ for all $(g_1,e_1),(g_2,e_2)\in B$.
\end{itemize}
\end{theor}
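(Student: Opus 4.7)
The plan is to combine the structural reduction of \cref{e11} with the enumeration in \cref{e12}, and then classify the possible actions of $E$ on $G$ by automorphisms of $(G,+,\circ)$. By \cref{e11}, every $B$ satisfying the hypotheses is isomorphic to a semidirect product of left semi-braces $G\rtimes_{\alpha} E$, where $E$ is the trivial left semi-brace of order $2$ and $G$ is a skew left brace of size $p^2$ with cyclic multiplicative group. By \cref{e12}, the skew brace $G$ is therefore isomorphic either to $G_1$ (the trivial skew brace on $\mathbb{Z}/p^2\mathbb{Z}$) or to $G_2$.

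The next step is to determine the admissible homomorphisms $\alpha\colon (E,\circ)\longrightarrow \Aut(G,+,\circ)$. Since $|E|=2$, such a homomorphism is determined by the image of the nonzero element, which must be an automorphism of $(G,+,\circ)$ whose order divides $2$. By \cref{e12}, if $G=G_1$ then $\Aut(G_1,+,\circ)$ contains exactly one automorphism of order $2$, so there are precisely two choices for $\alpha$: the trivial homomorphism and the unique nontrivial one. If $G=G_2$, then $\Aut(G_2,+,\circ)$ has no element of order $2$, so only the trivial homomorphism is available. The trivial-action case yields a direct product of left semi-braces; I would unpack the multiplicative formula of the semidirect product to see that these three constructions are exactly the three items 1), 2), 3) of the statement, using for 1) that $(B,\circ)=\mathbb{Z}/p^2\mathbb{Z}\rtimes_{\alpha}\mathbb{Z}/2\mathbb{Z}$ is the unique non-abelian group of order $2p^2$ with cyclic $p$-Sylow subgroup.

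Finally, I would verify that the three left semi-braces are pairwise non-isomorphic. The semi-brace in 1) is distinguished from those in 2) and 3) because its multiplicative group is non-abelian, while the groups in 2) and 3) are both $\mathbb{Z}/p^2\mathbb{Z}\times \mathbb{Z}/2\mathbb{Z}$. To separate 2) from 3), I would observe that any semi-brace isomorphism sends $G$ to $G$ (as $G$ is characterized as $B+0$ by \cref{addstrut}) and restricts to a skew left brace isomorphism between the copies of $G$ in each; since $G_1\not\cong G_2$ as skew left braces by \cref{e12}, this rules out isomorphism between 2) and 3).

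The main obstacle I anticipate is the non-isomorphism verification, specifically making sure that the restriction to $G$ of any semi-brace isomorphism is indeed a skew brace isomorphism; this follows from \cref{addstrut} together with the fact that isomorphisms of left semi-braces preserve both $+$ and $\circ$, and hence preserve the distinguished subgroup $G=B+0$ and its induced skew brace structure. Once this is secured, everything else is a routine bookkeeping of cases.
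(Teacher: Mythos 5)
Your proposal follows essentially the same route as the paper: reduce via \cref{e11} to a semidirect product $G\rtimes_\alpha E$, split into the cases $G=G_1$ and $G=G_2$ using the automorphism count in \cref{e12}, and distinguish the resulting three semi-braces by the (non-)abelianness of $(B,\circ)$ and the isomorphism type of $G$. Your write-up is more explicit than the paper's about why only the trivial action is available for $G_2$ and about the non-isomorphism check, but the argument is the same.
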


\begin{proof}
By \cref{e11} $B$ is isomorphic to the semidirect product of the trivial left semi-brace $E$ and the skew left brace $G$. If $G=G_2$, by \cref{e12} $B$ is necessarily isomorphic to the left semi-brace 3). If $G=G_1$ then $B$ is isomorphic to the left semi-brace 1), if $(B,\circ)$ is not abelian, and to the left semi-brace 2), if $(B,\circ)$ is abelian. Clearly, all these left semi-braces are not isomorphic each other.
\end{proof}

\subsection{Case $|E|=2$ and $(G,\circ)$ abelian non-cyclic}

\begin{lemma}\label{e14}
There are two skew left braces of size $p^2$ and with non-cyclic multiplicative group. The skew left brace $G_3:=(\mathbb{Z}/p\mathbb{Z}\times \mathbb{Z}/p\mathbb{Z},+,\circ)$ given by $g_1\circ g_2:=g_1+g_2$ for all $g_1,g_2\in G_3$ and the skew left brace $G_4:=(\mathbb{Z}/p\mathbb{Z}\times \mathbb{Z}/p\mathbb{Z},+,\circ)$ given by $(g_1,f_1)\circ (g_2,f_2):=(g_1+g_2+f_1 f_2,f_1+f_2)$ for all $g_1,g_2,f_1,f_2\in \mathbb{Z}/p\mathbb{Z}$. 
\end{lemma}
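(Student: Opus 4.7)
The plan is to combine a direct verification of the two displayed examples with an appeal to the classification of left braces of order $p^2$, in parallel with the cyclic case handled in \cref{e12}.

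First I would check that $G_3$ and $G_4$ are skew left braces with non-cyclic multiplicative group. The case of $G_3$ is immediate since $\circ$ coincides with $+$. For $G_4$, I would verify that $\circ$ is a commutative group operation with identity $(0,0)$ and inverse $(a,b)^- = (b^2 - a, -b)$, that the skew brace compatibility $a\circ(b+c) = (a\circ b) - a + (a\circ c)$ holds by direct substitution, and that $(G_4,\circ)$ has exponent $p$. For the last point, an induction yields $(0,1)^{\circ n}=\bigl(\binom{n}{2},\,n\bigr)$, so that $(0,1)^{\circ p}=(0,0)$ because $p$ is odd; coupled with the element $(1,0)$ of order $p$, this shows $(G_4,\circ)\cong \mathbb{Z}/p\mathbb{Z}\times\mathbb{Z}/p\mathbb{Z}$. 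Finally $G_3\not\cong G_4$ because the $\lambda$-action in $G_3$ is trivial, while in $G_4$ one computes $\lambda_{(0,1)}(0,1)=(1,1)\neq(0,1)$.

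For uniqueness, let $B$ be any skew left brace of size $p^2$ with $(B,\circ)$ non-cyclic. Since every group of order $p^2$ is abelian, $(B,+)$ is abelian and $B$ is in fact a left brace. I would then invoke the classification of left braces of order $p^2$ (contained in \cite{bachiller2015classification,rump2007classification}), which after fixing the shape of the multiplicative group leaves exactly the two examples in the statement. The case $(B,+)\cong \mathbb{Z}/p^2\mathbb{Z}$ is excluded because $\Aut(B,+)\cong \mathbb{Z}/p(p-1)\mathbb{Z}$, so the image of $\lambda\colon(B,\circ)\to\Aut(B,+)$ is cyclic of $p$-power order, and a short computation using $a\circ b = a + \lambda_a(b)$ then forces $(B,\circ)$ to be cyclic, a contradiction. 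Hence $(B,+)\cong \mathbb{Z}/p\mathbb{Z}\times\mathbb{Z}/p\mathbb{Z}$, and the classification delivers only $G_3$ (trivial $\lambda$) and $G_4$ (non-trivial $\lambda$, which after a change of additive basis corresponds to the standard unipotent subgroup of $\Aut(\mathbb{Z}/p\mathbb{Z}\times\mathbb{Z}/p\mathbb{Z})$).

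The main obstacle is the uniqueness step: ruling out the mixed configuration with cyclic $(B,+)$ and non-cyclic $(B,\circ)$, and collapsing all non-trivial $\lambda$-actions on $\mathbb{Z}/p\mathbb{Z}\times\mathbb{Z}/p\mathbb{Z}$ into a single isomorphism class. Both points follow from the fact that the Sylow $p$-subgroup of $\Aut(\mathbb{Z}/p\mathbb{Z}\times\mathbb{Z}/p\mathbb{Z})$ forms a single conjugacy class of cyclic subgroups of order $p$, generated by a single unipotent matrix; once $\lambda$ is conjugated into that form, the multiplication $\circ$ is forced to match the formula defining $G_4$.
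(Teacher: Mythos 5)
Your argument is correct and takes essentially the same route as the paper, whose entire proof is the citation of \cite[Proposition 2.4]{bachiller2015classification}: both reduce the statement to the known classification of left braces of order $p^2$ (every skew left brace of order $p^2$ is a left brace since $(B,+)$ is abelian). Your direct verification of $G_3$ and $G_4$ and the exclusion of the case $(B,+)\cong\mathbb{Z}/p^2\mathbb{Z}$ with non-cyclic $(B,\circ)$ simply make explicit what that citation supplies; no gaps.
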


\begin{proof}
It follows by \cite[Proposition 2.4]{bachiller2015classification}.
\end{proof}

\begin{lemma}\label{e15}
Up to conjugacy, the automorphisms $\alpha_1$ and $\alpha_2$ of order $2$ of the skew left brace $(G_3,+,\circ)$ can be identified, respectively, with the elements of $GL_2(p)$ given by
$$
A_1:=\begin{bmatrix}
-1 & 0  \\
0 & -1 
\end{bmatrix}
\mbox{ and }
A_2:=\begin{bmatrix}
1 & 0  \\
0 & -1
\end{bmatrix}
$$
having order $2$ in $GL_2(p)$.
The automorphisms of order $2$ of the skew left brace $(G_4,+,\circ)$ can be identified with the elements of $GL_2(p)$ given by
$$
A_b:=\begin{bmatrix}
1 & b  \\
0 & -1 
\end{bmatrix}
$$
where $b\in \mathbb{Z}/p\mathbb{Z}$. Moreover, for any $c\in \mathbb{Z}/p\mathbb{Z}$ there exist $d\in \mathbb{Z}/p\mathbb{Z}$ such that $A_c=A_d^{-1}A_0 A_d$. 
\end{lemma}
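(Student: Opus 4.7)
The plan is to analyse the two braces separately and reduce everything to elementary linear algebra over $\mathbb{Z}/p\mathbb{Z}$.

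For $(G_3,+,\circ)$ the operation $\circ$ coincides with $+$, so the skew-left-brace automorphism group equals $\mathrm{Aut}((\mathbb{Z}/p\mathbb{Z})^2,+) = GL_2(p)$. Since $p$ is odd, any element of order $2$ in $GL_2(p)$ has minimal polynomial dividing $(x-1)(x+1)$ and is therefore diagonalisable over $\mathbb{Z}/p\mathbb{Z}$ with eigenvalues in $\{\pm 1\}$. Ruling out the identity, the possible Jordan types are either both eigenvalues equal to $-1$, giving the central involution $A_1=-I$, or one $+1$ and one $-1$, giving a conjugate of $A_2=\mathrm{diag}(1,-1)$. Centrality of $A_1$ versus non-centrality of $A_2$ shows that these two classes are distinct, so $A_1,A_2$ are exactly the conjugacy representatives claimed.

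For $G_4$ the first step is to pin down $\mathrm{Aut}(G_4,+,\circ)$. Any such automorphism is an additive automorphism, hence a matrix $\begin{pmatrix}a & b\\ c & d\end{pmatrix}\in GL_2(p)$. I would impose $\phi(x\circ y)=\phi(x)\circ\phi(y)$ on generic inputs and read the constraints coordinatewise: comparing the second coordinates forces $c=0$, after which the first coordinate forces $a=d^{2}$. Thus every skew-brace automorphism has the upper-triangular form $\begin{pmatrix}d^{2} & b\\ 0 & d\end{pmatrix}$ with $d\in(\mathbb{Z}/p\mathbb{Z})^{*}$ and $b\in\mathbb{Z}/p\mathbb{Z}$. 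Squaring such a matrix and setting the result equal to $I$ yields the two conditions $d^{2}=1$ and $b(d+1)=0$. For $d=1$ this degenerates into $2b=0$, forcing $b=0$ (using $p>2$) and producing only the identity; for $d=-1$ the second equation is automatic and $b$ is free, so the order-$2$ automorphisms are exactly the matrices $A_{b}=\begin{pmatrix}1 & b\\ 0 & -1\end{pmatrix}$.

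Finally, for the conjugacy assertion I would observe that each $A_{d}$ is itself involutive, so $A_{d}^{-1}=A_{d}$, and a direct $2\times2$ multiplication gives $A_{d}^{-1}A_{0}A_{d}=A_{d}A_{0}A_{d}=A_{2d}$. Because $p$ is odd, the map $d\mapsto 2d$ is a bijection of $\mathbb{Z}/p\mathbb{Z}$, so for any prescribed $c$ one takes $d=2^{-1}c$ and obtains $A_{c}=A_{d}^{-1}A_{0}A_{d}$, as required.

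I do not expect a serious obstacle; the delicate point is the algebraic manipulation showing $c=0$ and $a=d^{2}$, since the two compatibility equations have to be exploited in the right order (the second coordinate yields $c=0$ immediately, which in turn linearises the first coordinate into $af_{1}f_{2}=d^{2}f_{1}f_{2}$). Once that classification of $\mathrm{Aut}(G_{4})$ is in hand, the rest is purely mechanical.
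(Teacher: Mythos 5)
Your proof is correct and follows essentially the same route as the paper: reduce to linear algebra over $\mathbb{Z}/p\mathbb{Z}$, note that $\Aut(G_3,+,\circ)=GL_2(p)$ so its involutions are diagonalisable with eigenvalues $\pm1$, determine the upper-triangular form $\begin{pmatrix} d^2 & b\\ 0 & d\end{pmatrix}$ of the automorphisms of $G_4$, and verify $A_d^{-1}A_0A_d=A_{2d}$. The only difference is that you derive $\Aut(G_4,+,\circ)$ by a direct coordinatewise computation where the paper outsources this to \cite[Lemma 3]{dietzel2021braces}, so your version is simply more self-contained.
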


\begin{proof}
Since $G_3$ is a trivial skew left brace, its automorphisms group is just the automorphisms group of $(\mathbb{Z}/p\mathbb{Z}\times \mathbb{Z}/p\mathbb{Z},+)$, and hence the first part follows. The last part follows by \cite[Lemma 3]{dietzel2021braces} and a standard calculation.
\end{proof}

\noindent Before giving the following lemma, we have to fix a notation. If $(B_1,+,\circ),(B_2,+',\circ')$ are left cancellative left semi-braces and $\alpha:(B_2,\circ')\longrightarrow Aut(B_1,+,\circ)$ is a homomorphism from $B_2$ to the automorphisms groups of $(B_1,+,\circ)$, then we indicate by $B_1\rtimes_{\alpha} B_2$ the semidirect product of the left semi-braces $B_1$ and $B_2$ by $\alpha$ and by $(B_1\rtimes_{\alpha} B_2,\circ)$ the semidirect product of the groups $(B_1,\circ)$ and $(B_2,\circ')$ by $\alpha$.

\begin{lemma}\label{semipq4}
Let $p$ be a prime number greater than $2$, $E$ the trivial left semi-brace having size $2$ and $G$ a skew left brace having size $p^2$ such that $(G,\circ)$ is the abelian non-cyclic group. Then if $\alpha,\alpha'$ are homomorphisms from $(E,\circ)$ to $Aut(G,+,\circ)$ such that the semidirect products of groups $(G\rtimes_{\alpha} E,\circ)$ and $(G\rtimes_{\alpha'} E,\circ)$ are isomorphic, then the semidirect products of left semi-braces $G\rtimes_{\alpha} E $ and $G\rtimes_{\alpha'} E$ are isomorphic.
\end{lemma}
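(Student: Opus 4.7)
The plan is to mimic the proof of \cref{semipq3}: first reduce the existence of a semi-brace isomorphism between $G\rtimes_{\alpha} E$ and $G\rtimes_{\alpha'} E$ to the conjugacy of $\alpha$ and $\alpha'$ inside $\Aut(G,+,\circ)$, and then derive this conjugacy from the assumed group isomorphism.

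As the first step I would show that any semi-brace isomorphism $f:G\rtimes_{\alpha} E\to G\rtimes_{\alpha'} E$ has the form $f(g,e)=(\phi(g),e)$ for some $\phi\in \Aut(G,+,\circ)$ satisfying $\alpha'_e=\phi\circ \alpha_e\circ \phi^{-1}$ for every $e\in E$, and that conversely every such $\phi$ produces a semi-brace isomorphism. In each semidirect product the set of additive idempotents is $\{0\}\times E$, the subgroup $B+0$ of \cref{addstrut} is $G\times \{0\}$, and both are intrinsic, so $f$ must preserve them; the additive rule $(g_1,e_1)+(g_2,e_2)=(g_1+g_2,e_2)$ combined with $|E|=2$ then forces $f(g,e)=(\phi(g),e)$, and matching the two sides of $f((g_1,e_1)\circ (g_2,e_2))=f(g_1,e_1)\circ f(g_2,e_2)$ produces the conjugacy condition. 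Verifying the converse is routine.

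For the second step I would use the group isomorphism hypothesis. Since the multiplicative group of $G\rtimes_\alpha E$ is abelian if and only if $\alpha$ is trivial, the two actions are simultaneously trivial or simultaneously non-trivial. In the trivial case both semi-braces coincide with the direct product of the skew left brace $G$ and the trivial left semi-brace $E$, and there is nothing to prove. In the non-trivial case $\alpha_e$ and $\alpha'_e$ are order-$2$ elements of $\Aut(G,+,\circ)$, and by \cref{e14} there are two possibilities for $G$.

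If $G=G_4$, \cref{e15} states that every order-$2$ element $A_c$ of $\Aut(G_4,+,\circ)$ can be written as $A_d^{-1}A_0 A_d$ with $A_d\in \Aut(G_4,+,\circ)$, so all order-$2$ elements form a single $\Aut(G_4,+,\circ)$-conjugacy class. Hence $\alpha_e$ and $\alpha'_e$ are conjugate inside $\Aut(G_4,+,\circ)$, and Step 1 yields the semi-brace isomorphism. If $G=G_3$, then $\Aut(G_3,+,\circ)=GL_2(p)$ and by \cref{e15} the involutions split into the two $GL_2(p)$-conjugacy classes of $A_1=-I$ and $A_2=\mathrm{diag}(1,-1)$. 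A short direct computation shows that the associated group semidirect products $(\mathbb{Z}/p\mathbb{Z})^2\rtimes_{A_1}\mathbb{Z}/2\mathbb{Z}$ and $(\mathbb{Z}/p\mathbb{Z})^2\rtimes_{A_2}\mathbb{Z}/2\mathbb{Z}$ are not isomorphic as groups; the cleanest invariant is the order of the center, which is $1$ in the first case and $p$ in the second. Consequently the hypothesis forces $\alpha_e$ and $\alpha'_e$ into the same $GL_2(p)$-conjugacy class, again supplying a conjugating element $\phi\in \Aut(G_3,+,\circ)$. The main obstacle is precisely this $G=G_3$ case, where one has to invoke a group-theoretic invariant to distinguish the two non-conjugate involutions; in the $G_4$ case the conclusion follows almost immediately from the conjugacy statement already present in \cref{e15}.
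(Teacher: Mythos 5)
Your proposal is correct and follows essentially the same route as the paper, whose proof is only a one-sentence sketch ("in the same spirit of \cref{semipq3}, using \cref{e15}"): you produce a group isomorphism of the special form $(g,e)\mapsto(\phi(g),e)$ with $\phi\in\Aut(G,+,\circ)$ conjugating $\alpha$ to $\alpha'$, which is exactly the isomorphism the paper has in mind. Your expansion supplies the details the paper omits, in particular the reduction to conjugacy in $\Aut(G,+,\circ)$ and the center computation separating the two $G_3$-cases.
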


\begin{proof}
In the same spirit of \cref{semipq3}, using \cref{e15}, one can easily show that there is a group isomorphism from $(G\rtimes_{\alpha} E,\circ)$ to $(G\rtimes_{\alpha'} E,\circ)$ that give rise to a left semi-braces isomorphisms.
\end{proof}

\begin{theor}
Let $B$ be a left cancellative left semi-brace of size $2p^2$. Suppose that $|E|=2$ and $(G,\circ)$ is the abelian non-cyclic group having order $p^2$. Then, up to isomorphisms, $B$ is one of the following left semi-braces:
\begin{itemize}
     \item[1)] $(B,\circ)=(\mathbb{Z}/p\mathbb{Z}\times \mathbb{Z}/p\mathbb{Z})\times \mathbb{Z}/2\mathbb{Z}$, $E=\{0\}\times \mathbb{Z}/2\mathbb{Z}$, $G=G_3=(\mathbb{Z}/p\mathbb{Z}\times \mathbb{Z}/p\mathbb{Z})\times \{0\}$,  $(g_1,e_1)+(g_2,e_2)=(g_1+g_2,e_2)$ and $(g_1,e_1)\circ (g_2,e_2)=(g_1+ g_2,e_1+ e_2)$ for all $(g_1,e_1),(g_2,e_2)\in B$;
     \item[2)] $(B,\circ)=(\mathbb{Z}/p\mathbb{Z}\times \mathbb{Z}/p\mathbb{Z})\rtimes_{\alpha_1} \mathbb{Z}/2\mathbb{Z}$, $E=\{0\}\times \mathbb{Z}/2\mathbb{Z}$, $G=G_3=(\mathbb{Z}/p\mathbb{Z}\times \mathbb{Z}/p\mathbb{Z})\times \{0\}$,  $(g_1,e_1)+(g_2,e_2)=(g_1+g_2,e_2)$ and $(g_1,e_1)\circ (g_2,e_2)=(g_1+ \alpha_{1_{e_1}}(g_2),e_1+ e_2)$ for all $(g_1,e_1),(g_2,e_2)\in B$;
     \item[3)] $(B,\circ)=(\mathbb{Z}/p\mathbb{Z}\times \mathbb{Z}/p\mathbb{Z})\rtimes_{\alpha_2} \mathbb{Z}/2\mathbb{Z}$, $E=\{0\}\times \mathbb{Z}/2\mathbb{Z}$, $G=G_3=(\mathbb{Z}/p\mathbb{Z}\times \mathbb{Z}/p\mathbb{Z})\times \{0\}$,  $(g_1,e_1)+(g_2,e_2)=(g_1+g_2,e_2)$ and $(g_1,e_1)\circ (g_2,e_2)=(g_1+ \alpha_{2_{e_1}}(g_2),e_1+ e_2)$ for all $(g_1,e_1),(g_2,e_2)\in B$;
     \item[4)] $(B,\circ)=(\mathbb{Z}/p\mathbb{Z}\times \mathbb{Z}/p\mathbb{Z})\times \mathbb{Z}/2\mathbb{Z}$, $E=\{0\}\times \mathbb{Z}/2\mathbb{Z}$, $G=G_4=(\mathbb{Z}/p\mathbb{Z}\times \mathbb{Z}/p\mathbb{Z})\times \{0\}$,  $(g_1,h_1,e_1)+(g_2,h_2,e_2)=(g_1+g_2,h_1+h_2,e_2)$ and $(g_1,e_1)\circ (g_2,e_2)=(g_1+ g_2+h_1h_2,h_1+h_2,e_1+ e_2)$ for all $(g_1,e_1),(g_2,e_2)\in B$;
     \item[5)] $(B,\circ)=(\mathbb{Z}/p\mathbb{Z}\times \mathbb{Z}/p\mathbb{Z})\rtimes_{\beta} \mathbb{Z}/2\mathbb{Z}$, $E=\{0\}\times \mathbb{Z}/2\mathbb{Z}$, $G=G_4=(\mathbb{Z}/p\mathbb{Z}\times \mathbb{Z}/p\mathbb{Z})\times \{0\}$,  $(g_1,h_1,e_1)+(g_2,h_2,e_2)=(g_1+g_2,h_1+h_2,e_2)$ and $(g_1,h_1,e_1)\circ (g_2,h_2,e_2)=((g_1+g_2+(-1)^{e_1}h_1h_2,h_1+(-1)^{e_1}h_2,e_1+ e_2)$ for all $(g_1,h_1,e_1),(g_2,h_2,e_2)\in B$.
\end{itemize}
where $\alpha_1$ and $\alpha_2$ are as in \cref{e15}.
\end{theor}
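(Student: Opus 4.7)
The approach is to apply \cref{e11} to realise $B$ as a semidirect product $G \rtimes_{\alpha} E$ of left semi-braces, with $E$ the trivial semi-brace of size $2$ and $G$ a skew left brace of order $p^2$, then use \cref{e14} to narrow $G$ to one of the two candidates $G_3$ or $G_4$, and finally enumerate the homomorphisms $\alpha : (E, \circ) \to \Aut(G, +, \circ)$ up to the equivalence supplied by \cref{semipq4} and \cref{e15}.

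Since $|E|=2$, the homomorphism $\alpha$ is determined by its value on the generator, which must be an involution of the skew brace $G$. The trivial $\alpha$ yields the direct product, giving cases $(1)$ and $(4)$. For non-trivial $\alpha$ with $G=G_3$, \cref{e15} provides exactly two conjugacy classes of involutions in $\Aut(G_3, +, \circ)=GL_2(p)$, represented by $A_1$ and $A_2$; via \cref{semipq4} these lift to at most two left semi-brace isomorphism classes, namely cases $(2)$ and $(3)$. For non-trivial $\alpha$ with $G=G_4$, \cref{e15} shows that the order-$2$ automorphisms form a single conjugacy class inside the automorphism group, so \cref{semipq4} collapses them to the single case $(5)$.

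It then remains to verify that the five candidates are pairwise non-isomorphic. Since every isomorphism of left cancellative left semi-braces preserves the skew left brace $G$ and the group $(B, \circ)$, the pair consisting of the isomorphism class of $G$ as a skew brace and the abelianness of $(B, \circ)$ already separates all cases apart from $(2)$ and $(3)$, which are in turn distinguished by the non-conjugacy of $A_1$ and $A_2$ in $GL_2(p)$; concretely, the centre of $(B, \circ)$ has different order in the two situations.

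The main obstacle is the $G=G_4$ subcase, where a seemingly one-parameter family of involutions $\{A_b\}_{b \in \mathbb{Z}/p\mathbb{Z}}$ must collapse to a single isomorphism class of left semi-braces. This relies on both halves of \cref{e15}---that every $A_b$ is a genuine automorphism of the skew brace structure on $G_4$, and that the $A_b$ are mutually conjugate in the ambient general linear group---together with \cref{semipq4} to transport group-level conjugacy back to an isomorphism of the semi-brace semidirect products.
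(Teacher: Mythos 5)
Your proposal is correct and follows essentially the same route as the paper: decompose $B$ via \cref{e11} as $G\rtimes_\alpha E$, restrict $G$ to $G_3$ or $G_4$ by \cref{e14}, and classify the possible $\alpha$ up to the equivalence given by \cref{e15} and \cref{semipq4}. The only cosmetic difference is that you organise the non-abelian cases by conjugacy classes of involutions in $\Aut(G,+,\circ)$ while the paper organises them by the centre of $(B,\circ)$, and you spell out the pairwise non-isomorphism check that the paper leaves implicit.
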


\begin{proof}
By \cref{e11}, $B$ is isomorphic to the semidirect product of the left semi-braces $E$ and $G$. If $(B,\circ)$ is abelian we have that $B$ is the direct product of the left semi-brace $E$ and the skew left brace $G$. Then we find the left semi-braces 1) if $G$ is a trivial skew left brace, and 4) if $G$ is a non-trivial skew left brace. Now, suppose that $(B,\circ)$ is a non-abelian group. If $G=G_3$ and $(B,\circ)$ is a non-abelian group with trivial center, we find the left semi-brace 2), while if $G=G_3$ and $(B,\circ)$ is a non-abelian group with non-trivial center, then we find the left semi-brace 3). Finally, if $G=G_4$ and $(B,\circ)$ is a non-abelian group, by \cref{e15} $(B,\circ)$ is the non abelian group with non-trivial center, hence we find the left semi-brace 5). By \cref{semipq4} all the cases are covered.
\end{proof}

\subsection{Case $|E|=p^2$}

\begin{lemma}\label{e1p2}
Let $B$ be a left cancellative left semi-brace of size $2p^2$ and suppose that $|E|=p^2$. Then, $B$ is isomorphic to the semidirect product of the trivial skew left brace $G$ of size $2$ and the trivial left semi-brace $E$. 
\end{lemma}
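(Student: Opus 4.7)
The plan is to reduce the statement to a direct application of Proposition \ref{semidi} once I have shown that $E$ is an ideal. The only nontrivial input is a Sylow-theoretic argument, so the proof should be quite short.

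First I would compute the size of $G$. Since $|B|=2p^2$ and $|E|=p^2$, Proposition \ref{fatt} immediately gives $|G|=2$. In particular, $(G,\circ)$ is the unique group of order two, and since in a skew left brace the additive and multiplicative identities coincide and there is only one group law on a set of two elements, the operations $+$ and $\circ$ on $G$ must agree. Hence $G$ is the trivial skew left brace of size $2$.

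Next, I would show that $E$ is an ideal of $B$. By the remark after the definition of ideal, it is enough to prove that $E$ is a normal subgroup of $(B,\circ)$. But $E$ is a subgroup of order $p^2$ inside $(B,\circ)$, a group of order $2p^2$, so it is a Sylow $p$-subgroup. The number $n_p$ of Sylow $p$-subgroups divides $2$ and satisfies $n_p\equiv 1\pmod p$; since $p>2$, the value $n_p=2$ is excluded, and therefore $n_p=1$. Consequently $E$ is the unique Sylow $p$-subgroup of $(B,\circ)$, hence normal, and so it is an ideal.

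Having shown that $E$ is an ideal of $B$, Proposition \ref{semidi} applies and yields that $B$ is isomorphic to a semidirect product $T\rtimes A$ of a trivial left semi-brace $T$ by a skew left brace $A$. Matching sizes, we must have $T=E$, the trivial left semi-brace of size $p^2$, and $A=G$, the trivial skew left brace of size $2$ identified above. This gives exactly the decomposition claimed in the statement.

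There is no real obstacle: all the work is packaged into the Sylow count ruling out $n_p=2$ (which is where the hypothesis $p>2$ is crucially used) and into the general structure theorem of Proposition \ref{semidi}. The only thing one should double-check while writing up is that the order of the factors in the semidirect product is consistent with the convention in Proposition \ref{semidi} (trivial left semi-brace as the normal factor) rather than with the one in Theorem \ref{teostrutt} (skew left brace as the normal factor), since the analogous Lemma \ref{e11} in the opposite case $|E|=2$ was obtained from Theorem \ref{teostrutt} instead.
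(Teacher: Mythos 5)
Your proof is correct and follows essentially the same route as the paper: establish that $E$ is a normal subgroup of $(B,\circ)$, hence an ideal, and then apply \cref{semidi} together with \cref{fatt} (which gives $|G|=2$). The only difference is in the normality step, where the paper simply observes that $E$ has index $2$ in $(B,\circ)$, while you invoke Sylow's theorem; also, your remark that the hypothesis $p>2$ is ``crucially used'' there is not accurate, since $n_p=2$ would force $2\equiv 1\pmod p$, which fails for every prime $p$.
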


\begin{proof}
Since $(E,\circ)$ has index $2$ in $(B,\circ)$, it is a normal subgroup. Hence the thesis follows by \cref{semidi} and \cref{fatt}.  
\end{proof}

\begin{theor}
Let $B$ be a left cancellative left semi-brace of size $2p^2$. Suppose that $|E|=p^2$. Then, up to isomorphisms, $B$ is one of the following left semi-braces:
 \item[1)] $(B,\circ)=(\mathbb{Z}/p\mathbb{Z}\times \mathbb{Z}/p\mathbb{Z})\times \mathbb{Z}/2\mathbb{Z}$, $G=\{0\}\times \mathbb{Z}/2\mathbb{Z}$, $E=(\mathbb{Z}/p\mathbb{Z}\times \mathbb{Z}/p\mathbb{Z})\times \{0\}$,  $(e_1,g_1)+(e_2,g_2)=(e_2,g_1+g_2)$ and $(e_1,g_1)\circ (e_2,g_2)=(e_1+e_2,g_1+ g_2)$ for all $(g_1,e_1),(g_2,e_2)\in B$;
 \item[2)] $(B,\circ)=(\mathbb{Z}/p^2\mathbb{Z})\times\mathbb{Z}/2\mathbb{Z}$, $E=\mathbb{Z}/p^2\mathbb{Z}\times \{0\}$, $G=\{0\}\times \mathbb{Z}/2\mathbb{Z}$,  $(e_1,g_1)+(e_2,g_2)=(e_2,g_1+g_2)$ and $(e_1,g_1)\circ (e_2,g_2)=(e_1+e_2,g_1+ g_2)$ for all $(g_1,e_1),(g_2,e_2)\in B$;
     \item[3)] $(B,\circ)=(\mathbb{Z}/p\mathbb{Z}\times \mathbb{Z}/p\mathbb{Z})\rtimes_{\alpha} \mathbb{Z}/2\mathbb{Z}$, $E=\mathbb{Z}/p\mathbb{Z}\times \mathbb{Z}/p\mathbb{Z}\times \{0\}$, $G=\{0\}\times \mathbb{Z}/2\mathbb{Z}$,  $(e_1,f_1,g_1)+(e_2,f_2,g_2)=(e_2,f_2,g_1+g_2)$ and $(e_1,f_1,g_1)\circ (e_2,f_2,g_2)=(e_1+(-1)^{g_1}e_2,f_1+(-1)^{g_1}f_2,e_1+ e_2)$ for all $(e_1,f_1,g_1),(e_2,f_2,g_2)\in B$;
     \item[4)] $(B,\circ)=(\mathbb{Z}/p\mathbb{Z}\times \mathbb{Z}/p\mathbb{Z})\rtimes_{\alpha'} \mathbb{Z}/2\mathbb{Z}$, $E=\mathbb{Z}/p\mathbb{Z}\times \mathbb{Z}/p\mathbb{Z}\times \{0\}$, $G=\{0\}\times \mathbb{Z}/2\mathbb{Z}$,  $(e_1,f_1,g_1)+(e_2,f_2,g_2)=(e_2,f_2,g_1+g_2)$ and $(e_1,f_1,g_1)\circ (e_2,f_2,g_2)=(e_1+(-1)^{g_1}e_2,f_1+f_2,e_1+ e_2)$ for all $(e_1,f_1,g_1),(e_2,f_2,g_2)\in B$;
    \item[5)] $(B,\circ)=(\mathbb{Z}/p^2\mathbb{Z})\rtimes_{\alpha''}\mathbb{Z}/2\mathbb{Z}$, $E=\mathbb{Z}/p^2\mathbb{Z}\times \{0\}$, $G=\{0\}\times \mathbb{Z}/2\mathbb{Z}$,  $(e_1,g_1)+(e_2,g_2)=(e_2,g_1+g_2)$ and $(e_1,g_1)\circ (e_2,g_2)=(e_1+(-1)^{g_1}e_2,g_1+ g_2)$ for all $(g_1,e_1),(g_2,e_2)\in B$;
\end{theor}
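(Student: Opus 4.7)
The plan is to apply \cref{e1p2} to rewrite $B$ as a semidirect product $E\rtimes_\alpha G$ of the trivial left semi-brace $E$ of size $p^2$ by the trivial skew left brace $G$ of size $2$, where $\alpha\colon (G,\circ)\to \Aut(E,+,\circ)$. Since $E$ is a trivial left semi-brace (its addition satisfies $a+b=b$), $\Aut(E,+,\circ)$ coincides with $\Aut(E,\circ)$, so $\alpha$ is simply an action of $\mathbb{Z}/2\mathbb{Z}$ on $(E,\circ)$ by group automorphisms and $B$ is entirely encoded by the pair $((E,\circ),\alpha)$ up to a natural equivalence.

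First I would split on the isomorphism type of $(E,\circ)\in\{\mathbb{Z}/p^2\mathbb{Z},\,(\mathbb{Z}/p\mathbb{Z})^2\}$ and enumerate the homomorphisms $\alpha\colon \mathbb{Z}/2\mathbb{Z}\to\Aut(E,\circ)$. For $(E,\circ)=\mathbb{Z}/p^2\mathbb{Z}$, the automorphism group is cyclic of order $p(p-1)$ and contains a unique involution (inversion), giving the two cases $2$ (trivial $\alpha$) and $5$ ($\alpha=$ inversion). For $(E,\circ)=(\mathbb{Z}/p\mathbb{Z})^2$, \cref{e15} shows that, up to conjugacy, the involutions of $\Aut(E,\circ)=GL_2(p)$ are $A_1=-I$ and $A_2=\mathrm{diag}(1,-1)$; together with the trivial action this yields cases $1$, $3$ and $4$.

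Next I would establish the appropriate analog of \cref{semipq4}: if $\alpha,\alpha'\colon G\to \Aut(E,\circ)$ satisfy $\alpha'=\varphi\alpha\varphi^{-1}$ for some $\varphi\in \Aut(E,\circ)$, then the map $(e,g)\mapsto (\varphi(e),g)$ is simultaneously a group isomorphism $(E\rtimes_\alpha G,\circ)\to (E\rtimes_{\alpha'} G,\circ)$ and an isomorphism of the (trivial) additive semigroups, hence a semi-brace isomorphism. This legitimately reduces the classification to the five cases listed.

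Finally, I would verify pairwise non-isomorphism by extracting invariants directly from the semi-brace structure. The isomorphism type of $(E,+,\circ)$ separates $\{2,5\}$ (cyclic $E$) from $\{1,3,4\}$ (non-cyclic $E$); commutativity of $(B,\circ)$ separates $1$ from $\{3,4\}$ and $2$ from $5$; and within the non-abelian non-cyclic case the center of $(B,\circ)$ is trivial in case $3$ (because $A_1=-I$ has no nonzero fixed point for $p$ odd) and of order $p$ in case $4$ (because $A_2$ fixes a line), which distinguishes them. The main, and rather mild, obstacle is the bookkeeping in the conjugacy-class reduction; once \cref{e15} is in hand, the remaining comparison of group-theoretic invariants is routine.
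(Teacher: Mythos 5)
Your proposal is correct and follows essentially the same route as the paper: reduce via \cref{e1p2} to a semidirect product $E\rtimes_\alpha G$ and then observe that, because $E$ is a trivial left semi-brace, the classification collapses to the group-theoretic classification of the semidirect products $(E\rtimes_\alpha G,\circ)$. The paper dismisses this last step as ``a long but easy calculation,'' whereas you actually supply it (conjugacy of involutions in $\Aut(E,\circ)$, the conjugation isomorphism $(e,g)\mapsto(\varphi(e),g)$, and the separating invariants), which is a faithful expansion rather than a different argument.
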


\begin{proof}
By \cref{e1p2}, the left semi-brace $B$ is isomorphic to a semidirect product of the trivial left semi-brace $E$ and the trivial skew left brace $G$ of size $2$. By a long but easy calculation, we have that the isomorphism classes of the semidirect product of left semi-braces $E\rtimes_{\alpha} G $ are completely determined by the isomorphism classes of the groups semidirect product $(E\rtimes G,\circ)$.
\end{proof}

\section{Left and right nilpotency of left cancellative left semi-braces}

The goal of this section is to study various types of nilpotency, recently introduced in \cite{catino2022nilpotency}, for the left semi-braces characterized in Section $3$. In particular, we characterize the ones that are right nilpotent and we find sufficient conditions to be right nil or left nil. These results will be applied to partially understand left and right nilpotency of the left semi-braces of size $pq$ and $2p^2$ obtained in Sections $4$ and $5$, many of which will allow us to answer in negative to \cite[Question 1]{catino2022nilpotency}.

\medskip

\noindent Let $B$ be a left cancellative left semi-brace and $\cdotp$ the binary operation on $B$ given by 
$$a\cdotp b:=\lambda_{a}(a^-)+a\circ b+\lambda_b(b^-) $$
for all $a,b\in B$. By \cite[Section 3]{catino2022nilpotency}, we have that $a\cdotp b\in G$, for all $a,b\in B$. Therefore, if $X$ and $Y$ are subsets of $B$, we denote by $X\cdotp Y$ the additive subgroup of $(G,+)$ generated by the elements of the form $a\cdotp b$, where $a\in X$ and $b\in Y$. In the case $X=Y:=B$ one can give two further notions which will be important in this section.

\begin{defin}
Let $B$ be a left cancellative left semi-brace. We define the subset $B^{(n)}$ and $B^n$ by  $B^{(1)}=B^1:=B$ and 
$$B^{(n+1)}:=B^{(n)}\cdotp  B+E\qquad B^{n+1}:=B\cdotp  B^n+E$$
for all $n\in \mathbb{N}$. 
\end{defin}

\noindent In analogy to skew left braces, we say that a left cancellative left semi-brace $B$ is \emph{right} (resp. \emph{left}) \emph{nilpotent} if there exist a natural number $n$ such that $B^{(n)}$ (resp. $B^n$) is equal to the set $E$. As a weaker version of the right nilpotency, we call a left cancellative left semi-brace \emph{right nil} if for every $b\in B$ there exist $n\in \mathbb{N}$ such that $b^{(n)}=0$, where $b^{(1)}:=b$ and $b^{(n+1)}:=b^{(n)}\cdotp b$ for all $b\in B$. As one can expect, the concept of \emph{left nil} left semi-brace is defined in a similar way.\\
As a first result of the section, we characterize left semi-braces of Section $2$ that are right nilpotent.

\begin{prop}\label{rnilp1}
Let $B:=G\rtimes_{\alpha} E$ be a left cancellative left semi-brace as in \cref{teostrutt}. Then, $B$ is right nilpotent if and only if $\alpha_e=id_G$ for all $e\in E$ and $G$ is right nilpotent. 
\end{prop}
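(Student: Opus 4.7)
The plan is to reduce the question to an explicit computation of the operation $\cdotp$ on $B\cong G\rtimes_{\alpha}E$ (as given by \cref{teostrutt}) and then inspect when the filtration $B^{(n)}$ can stabilise at $E$.

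First I would pin down the ambient structure. Identifying $G$ with $G\times\{0\}$ and $E$ with $\{0\}\times E$, one has $(g,e)\circ(h,f)=(g\circ\alpha_e(h),\,e\circ f)$ and $(g,e)+(h,f)=(g+h,\,f)$, the second coordinate collapsing because $E$ is trivial. In $(B,\circ)$ the inverse is $(g,e)^-=(\alpha_{e^-}(g^-),e^-)$, and since $\alpha_{e^-}$ preserves $+$ a short computation gives
\[
\lambda_{(g,e)}\bigl((g,e)^-\bigr)=(g,e)\circ\bigl((g,e)^-+(g,e)^-\bigr)=\bigl(g\circ(g^-+g^-),0\bigr)=(-g,0),
\]
using the skew brace identity $\lambda^G_g(g^-)=-g$ (itself a consequence of $g+\lambda^G_g(g^-)=g\circ g^-=0$). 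Substituting into $a\cdotp b=\lambda_a(a^-)+a\circ b+\lambda_b(b^-)$ yields the key formula
\[
(g,e)\cdotp(h,f)=\bigl(-g+g\circ\alpha_e(h)-h,\,0\bigr)\in G\times\{0\}.
\]
In particular $(0,e)\cdotp(h,f)=(\alpha_e(h)-h,0)$, while $(g,0)\cdotp(h,f)$ reproduces the operation $g\cdotp_G h$ coming from the skew brace $G$.

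Since $a\cdotp b\in G$ and the decomposition $B=G+E$ is direct, setting $X_n:=B^{(n)}\cap G$ one obtains $X_1=G$, $X_{n+1}=B^{(n)}\cdotp B$ as an additive subgroup of $(G,+)$, and $B^{(n)}=X_n+E$. Consequently $B$ is right nilpotent if and only if $X_n=\{0\}$ for some $n$. For the $(\Leftarrow)$ direction, if $\alpha_e=\id_G$ for every $e\in E$ the formula above collapses to $(g,e)\cdotp(h,f)=(g\cdotp_G h,0)$ independently of the $E$-coordinates, and a straightforward induction shows $X_n=G^{(n)}$; right nilpotency of $G$ therefore gives right nilpotency of $B$.

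For the $(\Rightarrow)$ direction, the decisive observation is that $E\subseteq B^{(n)}$ for every $n\ge 1$, so $X_{n+1}=B^{(n)}\cdotp B$ always contains the additive subgroup of $G$ generated by
\[
\{(0,e)\cdotp(h,f):e\in E,\ h\in G,\ f\in E\}=\{\alpha_e(h)-h:e\in E,\ h\in G\}.
\]
If $\alpha_{e_0}\ne\id_G$ for some $e_0\in E$, pick $h\in G$ with $\alpha_{e_0}(h)\ne h$; then $\alpha_{e_0}(h)-h$ is a nonzero element of $X_n$ for every $n$, so $B$ cannot be right nilpotent. Hence right nilpotency of $B$ forces $\alpha_e=\id_G$ for all $e\in E$, and the $(\Leftarrow)$ computation then forces $G$ itself to be right nilpotent. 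The main obstacle is really just the semidirect-product bookkeeping needed to establish the clean formula for $(g,e)\cdotp(h,f)$; once it is in hand, the characterisation is driven by the elementary but pivotal remark that the permanent inclusion $E\subseteq B^{(n)}$ provides a never-depleted supply of $\alpha_e$-commutators $\alpha_e(h)-h$ that must vanish if the filtration is to collapse.
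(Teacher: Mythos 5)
Your proof is correct, and it takes a genuinely different route from the paper's. The paper argues via ideals: for the sufficiency it notes that $\alpha_e=\id_G$ makes $E$ an ideal and then invokes \cite[Corollary 31]{catino2022nilpotency} to get $B^{(n)}=G^{(n)}+E$; for the necessity it uses that right nilpotency forces $E$ to be an ideal of $B$ (since some $B^{(n)}$ equals $E$), hence a normal subgroup of $(B,\circ)$, which in the semidirect product $(G,\circ)\rtimes_\alpha(E,\circ)$ immediately kills the action $\alpha$. You instead derive the explicit formula $(g,e)\cdot(h,f)=(-g+g\circ\alpha_e(h)-h,0)$ --- which is precisely the paper's \cref{nilel}, stated and proved only \emph{after} this proposition and used there only for \cref{rnilp2} --- and run both implications through it, with the decisive elementary remark that $E\subseteq B^{(n)}$ for every $n$ (since $B^{(n+1)}=B^{(n)}\cdot B+E$), so each $X_{n+1}=B^{(n)}\cap G$ permanently contains all elements $\alpha_e(h)-h$. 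This substitutes a direct computation for the paper's appeal to the external facts that $B^{(n)}$ is an ideal and that ideals containing $E$ decompose as $G^{(n)}+E$; the trade-off is that your argument is longer but fully self-contained, makes the obstruction to right nilpotency completely explicit, and in effect reorganises the section so that \cref{nilel} does the work for both \cref{rnilp1} and \cref{rnilp2}. All the individual computations (the inverse $(g,e)^-=(\alpha_{e^-}(g^-),e^-)$, the identity $\lambda_{(g,e)}((g,e)^-)=(-g,0)$, the identification $X_n=G^{(n)}$ when $\alpha$ is trivial, and the equivalence of right nilpotency with $X_n=\{0\}$) check out.
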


\begin{proof}
Suppose that $\alpha_e=id_G$ for all $e\in E$ and $G$ is right nilpotent. Then, $E$ is an ideal of $B$ and therefore by \cite[Corollary 31]{catino2022nilpotency} $B^{(n)}=G^{(n)}+E$ for all $n\in \mathbb{N}$. Therefore, if $G$ is right nilpotent and $m$ is a natural number such that $G^{(m)}=0$, we have that $B^{(m)}=E$.\\
Conversely, if $B$ is right nilpotent we have that $E$ is an ideal of $B$, hence a normal subgroup of $(B,\circ)$. This implies that $\alpha_e=id_G$ for all $e\in E$. Using again the equality $B^{(n)}=G^{(n)}+E$ for all $n\in \mathbb{N}$, the right nilpotency of $G$ also follows.
\end{proof}





\noindent By the previous result, examples of left cancellative left semi-braces that are not right nilpotent occur in abundance among the ones considered in Section $3$. However, a large family of these left semi-braces are right nil, as we can see in the next proposition. At first, we need a technical lemma.


\begin{lemma}\label{nilel}
Let $B:=G\rtimes_{\alpha} E$ be a left cancellative left semi-brace as in \cref{teostrutt}. Then 
$$(g_1,e_1)\cdotp (g_2,e_2)=(-g_1+g_1\circ \alpha_{e_1}(g_2)-g_2,0)$$
for all $g_1,g_2\in G$, $e_1,e_2\in E$.
\end{lemma}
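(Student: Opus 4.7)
The plan is to evaluate the defining expression $a\cdotp b = \lambda_a(a^-) + a\circ b + \lambda_b(b^-)$ term by term inside the semidirect product $G\rtimes_\alpha E$. Before doing so, I would record two preparatory identities. First, the multiplicative inverse: solving $(g,e)\circ(h,f)=(0,0)$ together with the fact that $\alpha:(E,\circ)\to \Aut(G,+,\circ)$ is a homomorphism yields $(g,e)^- = (\alpha_{e^-}(g^-), e^-)$. Second, and more importantly, $\lambda_{(g,e)}((g,e)^-) = (-g, 0)$: indeed $(g,e)^- + (g,e)^- = (\alpha_{e^-}(g^-) + \alpha_{e^-}(g^-),\, e^-)$ because $e^- + e^- = e^-$ in the trivial semi-brace $E$, and then applying $(g,e)\circ$ together with the additivity of $\alpha_e$ gives $(g\circ(g^- + g^-),\, 0)$, which collapses to $(-g, 0)$ by the skew brace axiom $g\circ(b+c) = g\circ b - g + g\circ c$ specialised to $b=c=g^-$.

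With these identities in hand, the rest is a direct substitution. The three summands appearing in $(g_1,e_1)\cdotp (g_2,e_2)$ are $(-g_1, 0)$, $(g_1\circ \alpha_{e_1}(g_2),\, e_1\circ e_2)$ and $(-g_2, 0)$. Summing them in $B$ using the semidirect product rule $(x,e)+(y,e')=(x+y,e')$ (which reduces to this form because the addition on $E$ is trivial), the second coordinate is forced to $0$ by the trailing summand $(-g_2,0)$, and the first coordinate adds up to $-g_1 + g_1\circ \alpha_{e_1}(g_2) - g_2$, which is precisely the asserted formula.

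The only real subtlety is the use of $\alpha_e \in \Aut(G,+,\circ)$ rather than just $\Aut(G,\circ)$: additivity is what makes $\alpha_e(\alpha_{e^-}(g^-) + \alpha_{e^-}(g^-)) = g^- + g^-$ and hence what makes the telescoping $\lambda_{(g,e)}((g,e)^-) = (-g,0)$ go through. This condition is guaranteed by \cref{lemker1} and is built into the semidirect product construction of \cref{teostrutt}. Beyond this bookkeeping, no genuine obstacle is expected; the argument is entirely mechanical.
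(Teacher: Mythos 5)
Your proposal is correct and is precisely the ``easy calculation'' that the paper leaves to the reader: you compute the inverse in the semidirect product, verify $\lambda_{(g,e)}((g,e)^-)=(-g,0)$ via the skew brace identity $g\circ(b+c)=g\circ b-g+g\circ c$, and sum the three terms using the trivial addition on $E$ to kill the second coordinate. Your remark that the additivity of $\alpha_e$ (from \cref{lemker1}) is what makes the telescoping work is a sensible point to flag, and nothing in the argument is missing.
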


\begin{proof}
It follows by an easy calculation.
\end{proof}

\begin{prop}\label{rnilp2}
Let $B:=G\rtimes_{\alpha} E$ be a left cancellative left semi-brace as in \cref{teostrutt}. Moreover, suppose that $G$ is a trivial skew left brace. Then, $b^{(3)}=0$ for all $b\in B$.
\end{prop}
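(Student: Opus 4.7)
The plan is to compute $b^{(3)}$ explicitly by iterating the formula from \cref{nilel}. Write an arbitrary $b \in B$ as $b = (g, e)$ with $g \in G$ and $e \in E$. Since $G$ is assumed to be a trivial skew left brace, we have $g_1 \circ g_2 = g_1 + g_2$ for every $g_1, g_2 \in G$, so the formula of \cref{nilel} collapses to
\[
(g_1, e_1) \cdotp (g_2, e_2) \;=\; (-g_1 + g_1 + \alpha_{e_1}(g_2) - g_2,\, 0) \;=\; (\alpha_{e_1}(g_2) - g_2,\, 0).
\]

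First I would apply this identity to obtain $b^{(2)} = b \cdotp b = (\alpha_e(g) - g,\, 0)$. Crucially, the second coordinate of $b^{(2)}$ is the neutral element $0$ of $E$, so $b^{(2)} \in G \times \{0\}$ (which is consistent with the fact, recalled before the statement, that $a \cdotp b \in G$ in general). Next I would compute $b^{(3)} = b^{(2)} \cdotp b = (\alpha_e(g) - g,\, 0) \cdotp (g, e)$; applying the simplified formula again with $e_1 = 0$ and recalling that $\alpha \colon E \to \Aut(G,+,\circ)$ is a group homomorphism, so $\alpha_0 = \id_G$, one gets
\[
b^{(3)} \;=\; (\alpha_0(g) - g,\, 0) \;=\; (g - g,\, 0) \;=\; 0.
\]

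There is essentially no obstacle here: the statement becomes a two-line verification once \cref{nilel} is specialized to the trivial-brace hypothesis. The conceptual content is that every $\cdotp$-product lands in $G \times \{0\}$, and since $\alpha_0$ is the identity, a second $\cdotp$-product with any element of $B$ automatically vanishes. Thus the triviality of the skew brace $G$ forces right nil-ness of index at most three on the whole semi-brace $B$, independently of $\alpha$ and of the orders of $G$ and $E$.
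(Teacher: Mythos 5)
Your proof is correct and follows essentially the same route as the paper: both specialize the formula of \cref{nilel} to the trivial-brace case to get $b^{(2)}=(\alpha_e(g)-g,0)$, and then use $\alpha_0=\id_G$ to conclude $b^{(3)}=(\alpha_0(g)-g,0)=0$. Your write-up is in fact slightly more explicit than the paper's about why the first coordinate simplifies and why the hypothesis on $\alpha$ being a homomorphism is what makes the last step work.
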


\begin{proof}
By \cref{nilel}, $(g_1,e_1)^{(2)}=(\alpha_{e_1}(g_1)-g_1,0) $ and $$(g_1,e_1)^{(3)}=(\alpha_{e_1}(g_1)-g_1,0)\cdotp (g_1,e_1)=(\alpha_0(g_1)-g_1,0)=(0,0) $$
for all $g_1\in G$, $e_1\in E$, hence the thesis.
\end{proof}

\noindent As implicitly showed in \cite[Section 5]{catino2022nilpotency}, every right nilpotent left semi-brace is right nil. On the other hand, \cref{rnilp1} and \cref{rnilp2} state that, using the left cancellative left semi-braces characterized in Section $3$, we can construct several examples of finite left cancellative left semi-braces that are right nil but not right nilpotent: as concrete examples, we consider below the left semi-braces obtained in Sections $4$ and $5$. Therefore, a large part of these left semi-braces answers in negative to \cite[Question 1]{catino2022nilpotency}.

\begin{exs}
\begin{itemize}
    \item[1)] Let $B$ be a left cancellative left semi-brace of size $pq$ (where $p,q$ are prime numbers with $p>q$) and with $|E|>1$. Then, if $|E|=p$ or if $(B,\circ)$ is abelian, $B$ is right nilpotent by \cite[Corollary 31]{catino2022nilpotency}. If $|E|=q$ and $(B,\circ)$ is not abelian, then $B$ is isomorphic to the semidirect product of left semi-braces $G\rtimes_{\alpha} E$, therefore by \cref{rnilp1} and \cref{rnilp2} $B$ is a right nil left semi-brace which is not right nilpotent.
    \item[2)] Let $B$ be a left cancellative left semi-brace of size $2p^2$. If $|E|=p^2$ or if $(B,\circ)$ is abelian, by \cite[Corollary 31]{catino2022nilpotency} it is right nilpotent. While, if $|E|=2$, $(B,\circ)$ is not abelian and $G$ is a trivial skew left brace of size $p^2$, by \cref{rnilp1} and \cref{rnilp2} $B$ is a left cancellative left semi-brace right nil but not right nilpotent.

\end{itemize}
\end{exs}

\medskip

\noindent In the rest of this section we focus on left nilpotency. In \cite[Section 5]{catino2022nilpotency} a family of left nilpotent left semi-braces was exhibited. In particular, they provided left nilpotent left semi-braces having nilpotent multiplicative group and with $E$ contained in the centralizer of $G$ or such that is a normal subgroup of $(B,\circ)$. Below, we show sufficient conditions for the left semi-braces of \cref{teostrutt} which ensure left nilpotency.

\begin{prop}\label{leftnilpo}
Let $B:=G\rtimes_{\alpha} E$ be a left cancellative left semi-brace as in \cref{teostrutt} and suppose that $(G,+,\circ)$ is a skew left brace with cyclic additive group and abelian multiplicative group such that the number $s:=|G/Soc(G)|$ is a nilpotent element of the ring $(G,+,*)$ (where $*$ is the usual scalar multiplication). Then, $B$ is a left nilpotent left cancellative semi-brace.
\end{prop}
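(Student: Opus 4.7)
The plan is to reduce left nilpotency of $B=G\rtimes_{\alpha}E$ to the nilpotency of a single ideal in the commutative ring $\mathbb{Z}/m\mathbb{Z}$ (with its standard multiplication), where $m=|G|$. First I would identify $(G,+)$ with $\mathbb{Z}/m\mathbb{Z}$, which is legitimate since the hypothesis on $s$ forces $G$ to be finite. The brace operation $a*b:=-a+a\circ b-b$ is $\mathbb{Z}$-bilinear, so on the cyclic group $\mathbb{Z}/m\mathbb{Z}$ it takes the form $a*b=c\,a\,b$ for a unique $c\in\mathbb{Z}/m\mathbb{Z}$ (with $\cdot$ the standard multiplication). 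Then $\mathrm{Soc}(G)=\{a:ca\equiv 0\pmod m\}=(m/d)\mathbb{Z}/m\mathbb{Z}$ with $d=\gcd(c,m)$, so $s=m/d$. A first structural observation I would record is that the existence of inverses in $(G,\circ)$ forces $1+ca$ to be a unit of $\mathbb{Z}/m\mathbb{Z}$ for every $a$; analysing this prime by prime forces $p\mid c$ for every $p\mid m$, so $c$ itself is nilpotent in $\mathbb{Z}/m\mathbb{Z}$.

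Next I would analyse $\Aut(G,+,\circ)$: every such automorphism is multiplication by a unit $u$ of $\mathbb{Z}/m\mathbb{Z}$, and compatibility with $*$ gives the relation $c(u-1)\equiv 0\pmod m$, equivalently $u\equiv 1\pmod s$. Hence each $\alpha_e$ is multiplication by some unit $u_e$ with $s\mid(u_e-1)$, and the identity $c(u_e-1)\equiv 0\pmod m$ immediately yields $g_1*\alpha_{e_1}(h)=g_1*h$ for all $g_1,h\in G$.

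With this preparation, the core step is to describe $L_n$, the additive subgroup of $G$ formed by the $G$-components of $B^n$. Using \cref{nilel} together with the identity just noted,
\[
(g_1,e_1)\cdot(h,f)=\bigl((u_{e_1}-1)h+c\,g_1\,h,\;0\bigr).
\]
As $(g_1,e_1,f)$ vary and $h$ ranges over $L_n$, the additive span of these elements is $I_\alpha L_n+cL_n=L_2\cdot L_n$ (product of ideals in $\mathbb{Z}/m\mathbb{Z}$), where $I_\alpha$ is the ideal generated by $\{u_e-1:e\in E\}$ and $L_2=I_\alpha+c\mathbb{Z}/m\mathbb{Z}$. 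A straightforward induction then yields $L_{n+1}=L_2^{n}$.

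Finally, to conclude that $B$ is left nilpotent it suffices to show $L_2$ is a nilpotent ideal of $\mathbb{Z}/m\mathbb{Z}$. Fix a prime $p\mid m$: the hypothesis that $s$ is nilpotent in $\mathbb{Z}/m\mathbb{Z}$ gives $p\mid s$, and since $s\mid(u_e-1)$ for every $e$, also $p\mid(u_e-1)$, so $I_\alpha\subseteq p\mathbb{Z}/m\mathbb{Z}$; combined with $p\mid c$, this yields $L_2\subseteq p\mathbb{Z}/m\mathbb{Z}$. Intersecting over all primes dividing $m$, $L_2$ lies in the nilradical of $\mathbb{Z}/m\mathbb{Z}$ and is therefore nilpotent, so $L_2^k=0$ for some $k$; then $B^{k+1}=L_2^{k}+E=E$, proving left nilpotency. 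The main obstacle is the clean identification of $L_2$ as an ideal and the collapse $L_{n+1}=L_2\cdot L_n$: once the commutativity of the brace ring and the relation $c(u_e-1)=0$ are exploited, everything else reduces to a transparent radical argument in $\mathbb{Z}/m\mathbb{Z}$.
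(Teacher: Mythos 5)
Your argument is correct and follows essentially the same route as the paper: identify $G$ with $\mathbb{Z}/m\mathbb{Z}$ so that $\circ$ and the $\alpha_e$ become affine/linear maps, compute $(g_1,e_1)\cdotp(g_2,e_2)$ via \cref{nilel}, and show by induction that the $G$-component of $B^n$ sinks into successive powers of the nilradical of $\mathbb{Z}/m\mathbb{Z}$ (your $L_{n+1}=L_2^{n}$ with $L_2\subseteq (p_1\cdots p_z)$ is exactly the paper's $B^n\subseteq (p_1\cdots p_z)^{n-1}G+E$). The only difference is presentational: you re-derive from scratch the facts the paper imports from Rump (nilpotency of the structure constant $c$ and the congruence $u_e\equiv 1\pmod s$ for brace automorphisms), which makes your write-up more self-contained but does not change the proof.
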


\begin{proof}
Since $(G,+,\circ)$ has cyclic additive group and abelian multiplicative group, by \cite[Proposition 1]{rump2019classification} there exist $d\in \mathbb{N}\setminus \{1\}$ such that $d=|Soc(G)|$ and $g_1\circ g_2=g_1+g_2+g_1 g_2 d$ for all $g_1,g_2\in G$. By \cite[Theorem 1]{rump2007classification}, we have that $d$ is a nilpotent element of the ring $(G,+,*)$. Now, since $\lambda_{g_1}(g)=(1+dg_1)g$ for all $g,g_1\in G$ and by \cite[Proposition 1]{rump2019classification} for all $e\in E$ there exist $k_e\in \mathbb{N}$ such that $\alpha_e(g)=(1+k_e s) g$ for all $g\in G$, by \cref{nilel}, we have that
$$(g_1,e_1)\cdotp (g_2,e_2)=([(1+dg_1)(1+k_{e_1}s)-1]g_2,0)=((k_{e_1} s+dg_1+dk_{e_1} g_1 s)g_2,0)$$
for all $g_1,g_2\in G$, $e_1,e_2\in E$. Now, if $\{p_1,...,p_z\}$ is the set of prime numbers dividing $|G|$,  by a standard induction and using the nilpotency of $d$ and $s$, one can show that $B^n\subseteq (p_1...p_z)^{n-1} G+E $ for all $n\in \mathbb{N}$ and this clearly implies that $B$ is left nilpotent.
\end{proof}

\noindent \cref{leftnilpo} can be used to construct left nilpotent left cancellative semi-braces that are different from the ones obtained in \cite[Section 5]{catino2022nilpotency}, as we see in the following example.

\begin{ex}
Let $G:=(\mathbb{Z}/p^2\mathbb{Z},+,\circ)$ be the skew left braces given by $a\circ b:=a+b+pab$ for all $a,b\in \mathbb{Z}/p^2\mathbb{Z}$ and $E$ the trivial left semi-brace on $\mathbb{Z}/p\mathbb{Z}$. Let $B$ be the left semi-brace given by $B:=G\rtimes_{\alpha} E$ where $\alpha:E\longrightarrow Aut(G,+,\circ)$ is given by $\alpha_e(g):=(1+pe)g$ for all $e\in E$, $g\in G$. By \cref{leftnilpo} $B$ is a left nilpotent left cancellative semi-brace. On the other hand, $E$ is not contained in the centralizer of $G$ and it is not a normal subgroup of $(B,\circ)$ hence it can not be constructed by \cite[Proposition 44]{catino2022nilpotency}.
\end{ex}

\noindent However, there exist examples of left cancellative left semi-braces provided by \cref{teostrutt} that are not left nilpotent, as we can note by the following example.

\begin{ex}\label{exleft}
Let $B$ be a left cancellative left semi-braces of size $pq$ (where $p,q$ are prime numbers, $p>q$). Then, if $(B,\circ)$ is abelian or $|E|=p$, since $G$ is a trivial skew left brace of size $q$, by \cite[Corollary 14]{catino2022nilpotency} we have that $B$ is left nilpotent. If $(B,\circ)$ is not abelian and $|E|=q$, then $B$ is a semidirect product of left semi-braces as in \cref{teostrutt}. By a standard calculation, we obtain that $B^2=B$, hence $B$ is not left nilpotent.
\end{ex}


\noindent We conclude the paper remarking that in contrast to the right-case, the left semi-braces of size $pq$ are left nil if and only if they are left nilpotent, therefore Question 2 of \cite{catino2022nilpotency}, where the authors ask if a finite left nil is left nilpotent, remains open.



\bibliographystyle{elsart-num-sort}
\bibliography{Bibliography}

\def\cprime{$'$}
\begin{thebibliography}{10}
\expandafter\ifx\csname url\endcsname\relax
  \def\url#1{\texttt{#1}}\fi
\expandafter\ifx\csname urlprefix\endcsname\relax\def\urlprefix{URL }\fi

\bibitem{acri2020skew}
E.~Acri, M.~Bonatto, Skew braces of size $pq$, Comm. Algebra 48~(5) (2020)
  1872--1881.
\newline\urlprefix\url{https://doi.org/10.1080/00927872.2019.1709480}

\bibitem{acri2020retractability}
E.~Acri, R.~Lutowski, L.~Vendramin, Retractability of solutions to the
  {Yang--Baxter} equation and $p$-nilpotency of skew braces, Internat. J.
  Algebra Comput. 30~(01) (2020) 91--115.
\newline\urlprefix\url{https://doi.org/10.1142/S0218196719500656}

\bibitem{bachiller2015classification}
D.~Bachiller, Classification of braces of order $p^3$, Journal Pure Appl. Alg.
  219~(8) (2015) 3568--3603.
\newline\urlprefix\url{https://doi.org/10.1515/forum-2015-0240}

\bibitem{baxter1972partition}
R.~J. Baxter, Partition function of the eight-vertex lattice model, Annals of
  Physics 70~(1) (1972) 193--228.
\newline\urlprefix\url{https://doi.org/10.1016/0003-4916(72)90335-1}

\bibitem{brzezinski2017towards}
T.~Brzezi{\'n}ski, Towards semi-trusses, Rev. Roumaine Math. Pures Appl
  LXIII~(2(2018)).
\newline\urlprefix\url{https://cronfa.swan.ac.uk/Record/cronfa38308}

\bibitem{brzezinski2019trusses}
T.~Brzezi{\'n}ski, Trusses: Between braces and rings, Trans. Amer. Math. Soc.
  372~(6) (2019) 4149--4176.
\newline\urlprefix\url{https://doi.org/10.1090/TRAN%2F7705}

\bibitem{brzezinski2022pre}
T.~Brzezi{\'n}ski, S.~Mereta, B.~Rybo{\l}owicz, From pre-trusses to skew
  braces, Publicacions Matem{\`a}tiques 66~(2) (2022) 683--714.
\newline\urlprefix\url{https://doi.org/10.5565/PUBLMAT6622206}

\bibitem{CaCaSt20x}
M.~Castelli, F.~Catino, P.~Stefanelli, Indecomposable involutive set-theoretic
  solutions of the {Y}ang-{B}axter equation and orthogonal dynamical extensions
  of cycle sets, Mediterr. J. Math. 18, no. 246(2021).
\newline\urlprefix\url{https://doi.org/10.1007/s00009-021-01912-4}

\bibitem{catino2022nilpotency}
F.~Catino, F.~Ced{\'o}, P.~Stefanelli, Nilpotency in left semi-braces, J.
  Algebra 604 (2022) 128--161.
\newline\urlprefix\url{https://doi.org/10.1016/j.jalgebra.2022.04.004}

\bibitem{cacs3}
F.~Catino, I.~Colazzo, P.~Stefanelli, Semi-braces and the {Y}ang-{B}axter
  equation, J. Algebra 483 (2017) 163--187.
\newline\urlprefix\url{https://doi.org/10.1016/j.jalgebra.2017.03.035}

\bibitem{catino2021set}
F.~Catino, I.~Colazzo, P.~Stefanelli, Set-theoretic solutions to the
  {Yang--Baxter} equation and generalized semi-braces, Forum Math. 33~(3)
  (2021) 757--772.
\newline\urlprefix\url{https://doi.org/10.1515/forum-2020-0082}

\bibitem{catino2022set}
F.~Catino, M.~Mazzotta, M.~M. Miccoli, P.~Stefanelli, Set-theoretic solutions
  of the {Yang--Baxter} equation associated to weak braces, Semigroup Forum
  104~(2) (2022) 228--255.
\newline\urlprefix\url{https://doi.org/10.1007/s00233-022-10264-8}

\bibitem{catino2021inverse}
F.~Catino, M.~Mazzotta, P.~Stefanelli, Inverse semi-braces and the
  {Yang-Baxter} equation, J. Algebra 573 (2021) 576--619.
\newline\urlprefix\url{https://doi.org/10.1016/j.jalgebra.2021.01.009}

\bibitem{cedo2014braces}
F.~Ced{\'o}, E.~Jespers, J.~Okni{\'n}ski, Braces and the {Yang-Baxter}
  equation, Comm. Math. Phys. 327~(1) (2014) 101--116.
\newline\urlprefix\url{https://doi.org/10.1007/s00220-014-1935-y}

\bibitem{CeSmVe19}
F.~Ced\'{o}, A.~Smoktunowicz, L.~Vendramin, Skew left braces of nilpotent type,
  Proc. Lond. Math. Soc. (3) 118~(6) (2019) 1367--1392.
\newline\urlprefix\url{https://doi.org/10.1112/plms.12209}

\bibitem{cedo2022new}
F.~Cedó, J.~Okniński, {New simple solutions of the Yang-Baxter equation and
  solutions associated to simple left braces}, J. Algebra 600 (2022) 125--151.
\newline\urlprefix\url{https://doi.org/10.1016/j.jalgebra.2022.02.011}

\bibitem{colazzo2021algebraic}
I.~Colazzo, A.~{Van Antwerpen}, The algebraic structure of left semi-trusses,
  Journal Pure Appl. Algebra 225~(2) (2021) 106467.
\newline\urlprefix\url{https://doi.org/10.1016/j.jpaa.2020.106467}

\bibitem{dietzel2021braces}
C.~Dietzel, Braces of order $p^2q$, J. Algebra Appl. 20~(08) (2021) 2150140.
\newline\urlprefix\url{https://doi.org/10.1142/S0219498821501401}

\bibitem{drinfeld1992some}
V.~G. Drinfel\cprime~d, On some unsolved problems in quantum group theory, in:
  Quantum groups ({L}eningrad, 1990), vol. 1510 of Lecture Notes in Math.,
  Springer, Berlin, 1992, pp. 1--8.
\newline\urlprefix\url{https://doi.org/10.1007/BFb0101175}

\bibitem{etingof1998set}
P.~Etingof, T.~Schedler, A.~Soloviev, Set-theoretical solutions to the {Quantum
  Yang-Baxter} equation, Duke Math. J. 100~(2) (1999) 169--209.
\newline\urlprefix\url{http://doi.org/10.1215/S0012-7094-99-10007-X}

\bibitem{gateva2008matched}
T.~Gateva-Ivanova, S.~Majid, Matched pairs approach to set theoretic solutions
  of the {Yang--Baxter} equation, J. Algebra 319~(4) (2008) 1462--1529.
\newline\urlprefix\url{https://doi.org/10.1016/j.jalgebra.2007.10.035}

\bibitem{gateva1998semigroups}
T.~Gateva-Ivanova, M.~Van~den Bergh, Semigroups of {I-Type}, J. Algebra 206~(1)
  (1998) 97--112.
\newline\urlprefix\url{https://doi.org/10.1006/jabr.1997.7399}

\bibitem{guarnieri2017skew}
L.~Guarnieri, L.~Vendramin, Skew braces and the {Y}ang-{B}axter equation, Math.
  Comp. 86~(307) (2017) 2519--2534.
\newline\urlprefix\url{https://doi.org/10.1090/mcom/3161}

\bibitem{JeVa19}
E.~Jespers, A.~Van~Antwerpen, Left semi-braces and solutions of the
  {Y}ang-{B}axter equation, Forum Math. 31~(1) (2019) 241--263.
\newline\urlprefix\url{https://doi.org/10.1515/forum-2018-0059}

\bibitem{rump2007braces}
W.~Rump, Braces, radical rings, and the quantum {Y}ang-{B}axter equation, J.
  Algebra 307~(1) (2007) 153--170.
\newline\urlprefix\url{https://doi.org/10.1016/j.jalgebra.2006.03.040}

\bibitem{rump2007classification}
W.~Rump, Classification of cyclic braces, Journal Pure Appl. Algebra 209~(3)
  (2007) 671--685.
\newline\urlprefix\url{https://doi.org/10.1016/j.jpaa.2006.07.001}

\bibitem{rump2019classification}
W.~Rump, Classification of cyclic braces, {II}, Trans. Amer. Math. 372~(1)
  (2019) 305--328.
\newline\urlprefix\url{https://doi.org/10.1090/TRAN%2F7569}

\bibitem{rump2019construction}
W.~Rump, Construction of finite braces, Ann. Comb. 23~(2) (2019) 391--416.
\newline\urlprefix\url{https://doi.org/10.1007/s00026-019-00430-1}

\bibitem{rump2019set}
W.~Rump, Set-theoretic solutions to the {Yang--Baxter} equation, skew-braces,
  and related near-rings, J. Algebra Appl. 18~(08) (2019) 1950145.
\newline\urlprefix\url{https://doi.org/10.1142/S0219498819501457}

\bibitem{rump2020one}
W.~Rump, One-generator braces and indecomposable set-theoretic solutions to the
  {Y}ang–{B}axter equation, Proc. Edinb. Math. Soc. (2020) 1–21.
\newline\urlprefix\url{https://doi.org/10.1017/S0013091520000073}

\bibitem{samman1998topics}
M.~Samman, Topics in seminear-ring theory, Annexe Thesis Digitisation Project
  2019 Block 22.

\bibitem{smoktunowicz2018engel}
A.~Smoktunowicz, On {Engel groups, nilpotent groups, rings, braces and the
  Yang-Baxter equation}, Trans. Amer. Math 370~(9) (2018) 6535--6564.
\newline\urlprefix\url{https://doi.org/10.1090/tran/7179}

\bibitem{smock2018skew}
A.~Smoktunowicz, L.~Vendramin, On skew braces (with an appendix by {N. Byott
  and L. Vendramin}), J. Comb. Algebra 2~(1) (2018) 47--86.
\newline\urlprefix\url{https://doi.org/10.4171/JCA/2-1-3}

\bibitem{stefanelli2022affine}
P.~Stefanelli, Semi-affine structures on groups and semi-braces, J. Pure Appl.
  Algebra 227~(No. 107383) (2023) 19 pp.
\newline\urlprefix\url{https://doi.org/10.1016/j.jpaa.2023.107383}

\bibitem{stefanello2022bi}
L.~Stefanello, S.~Trappeniers, On bi-skew braces and brace blocks, J. Pure
  Appl. Algebra 227~(5) (2023) 107295.
\newline\urlprefix\url{https://doi.org/10.1016/j.jpaa.2022.107295}

\bibitem{yang1967}
C.~N. Yang, {Some Exact Results for the Many-Body Problem in one Dimension with
  Repulsive Delta-Function Interaction}, Phys. Rev. Lett. 19 (1967) 1312--1315.
\newline\urlprefix\url{https://link.aps.org/doi/10.1103/PhysRevLett.19.1312}

\end{thebibliography}

\end{document}